\begin{document}
\title{On products of ultrafilters}
\author{Gabriel Goldberg\\ Evans Hall\\ University Drive \\ Berkeley, CA 94720}
\maketitle
\begin{abstract}
    Assuming the Generalized Continuum Hypothesis, this paper answers
    the question: when is the tensor product of two ultrafilters
    equal to their Cartesian product?   
    It is necessary and sufficient that their Cartesian product is an ultrafilter;
    that the two ultrafilters commute in the tensor product;
    that for all cardinals \(\lambda\), one of the
    ultrafilters is both \(\lambda\)-indecomposable and \(\lambda^+\)-indecomposable;
    that the ultrapower embedding associated to each ultrafilter 
    restricts to a definable embedding of the ultrapower of the universe 
    assocaited to the other. 
\end{abstract}
\section{Introduction}
There are two binary operations on the class of ultrafilters
commonly referred to as the product, each of which fails 
to live up to the name.
The first, the Cartesian product, 
truly is a product in the category of filters.
The subcategory of ultrafilters, however,
is not closed under Cartesian products. 
The other, the tensor product, is at least
an operation on ultrafilters.
But it is not a product. It is not even commutative (up to isomorphism).
This raises the following questions:
\begin{qst}\label{qst:main_questions}\begin{enumerate}[(1)]
    \item When is the Cartesian product of two ultrafilters an ultrafilter?
    \item On which pairs of ultrafilters is the tensor product commutative?
    \item When do the Cartesian and tensor products coincide?
\end{enumerate}\end{qst}
For ultrafilters on \(\omega\),
the answer to all three questions is essentially \textit{never}. The
general question, however, is much more subtle and involves large cardinals.
The main conjecture motivating this paper is that all three questions have the same answer:
\begin{conj}
    The Cartesian product of two ultrafilters is an ultrafilter if and only if
    the ultrafilters commute under the tensor product if and only if 
    their Cartesian product is equal to their tensor product. 
\end{conj}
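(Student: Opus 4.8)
The plan is to prove the Conjecture under the Generalized Continuum Hypothesis that governs the paper, by showing that its three conditions are equivalent: for ultrafilters $U$ on $X$ and $W$ on $Y$, write (A) for ``$U \times W$ is an ultrafilter'', (B) for ``$U \otimes W \cong W \otimes U$'', and (C) for ``$U \times W = U \otimes W$''. The equivalences (A) $\Leftrightarrow$ (C) and (A) $\Rightarrow$ (B) are soft and hold in ZFC; all of the work, and all of the use of GCH, goes into the implication (B) $\Rightarrow$ (A), which I would route through the indecomposability dichotomy of the abstract.

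For the soft part: since $U \times W \subseteq U \otimes W$ and $U \otimes W$ is always an ultrafilter, no ultrafilter lies strictly between them, so (A) $\Leftrightarrow$ (C). Condition (A) is symmetric in $U$ and $W$ because the swap bijection $\sigma \colon X \times Y \to Y \times X$ carries $U \times W$ to $W \times U$; hence if (A) holds then also $W \times U = W \otimes U$, and transporting this equality along $\sigma$ gives $U \otimes W \cong W \otimes U$, i.e.\ (B). So everything comes down to (B) $\Rightarrow$ (A).

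The hinge is a reformulation of (A) inside the ultrapower. Let $j_U \colon V \to M_U$ be the ultrapower embedding, let $j_U[W]$ denote the pointwise image $\{\, j_U(B) : B \in W \,\}$, and for $Z \subseteq X \times Y$ write $Z_x = \{\, y : (x,y) \in Z \,\}$, so that $x \mapsto Z_x$ represents an element $[x \mapsto Z_x]_U$ of $j_U(\mathcal{P}(Y))$ in $M_U$. A short computation shows that, for a given $Z$, either $Z$ or its complement contains a rectangle $A \times B$ with $A \in U$ and $B \in W$ exactly when $j_U(B) \subseteq [x \mapsto Z_x]_U$ for some $B \in W$; and as $Z$ varies, $[x \mapsto Z_x]_U$ ranges over all of $j_U(W)$. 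Hence (A) is equivalent to the assertion $(\star)$ that $j_U[W]$ generates the ultrafilter $j_U(W)$ in $M_U$ --- equivalently, by the symmetry above, that $j_W[U]$ generates $j_W(U)$. The main lemma is then that $(\star)$ is equivalent to condition (d) of the abstract: for every cardinal $\lambda$, one of $U, W$ is both $\lambda$-indecomposable and $\lambda^+$-indecomposable. In the direction $(\star) \Rightarrow (d)$, a $\lambda$-decomposition of $W$ projects $W$ onto a uniform ultrafilter on $\lambda$ and $(\star)$ descends to the projection; so if $U$ were also $\lambda$- or $\lambda^+$-decomposable, then $j_U$ is discontinuous at $\lambda$ or leaves a gap just above $\sup j_U[\lambda]$, and one exhibits an element of the projected $j_U(W)$ omitting every image $j_U(B)$ --- the reason one must track $\lambda^+$ alongside $\lambda$ is precisely to cover the successor case, where continuity at $\lambda$ alone still permits such a gap. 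The converse, synthesizing the generating family $(\star)$ out of the indecomposability dichotomy, is where I expect GCH to be essential: $2^\lambda = \lambda^+$ pins down the cardinal arithmetic of $M_U$ and bounds the number of sets needed to generate $j_U(W)$, making the generators constructible from the indecomposability data.

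The final move is to derive (d), hence (A), from commutativity (B). Observe that an isomorphism of ultrafilters that commutes with the embeddings of $V$ must be the identity on the (transitive) ultrapowers, so (B) literally says that iterating ``$U$ then $W$'' and ``$W$ then $U$'' produces the same model with the same embedding of $V$; I would then analyze the two factor embeddings $M_U \to M_{U \otimes W}$ and $M_W \to M_{U \otimes W}$ and, counting cardinals inside $M_{U \otimes W}$ under GCH, show that these factors cannot interact unless the $\lambda/\lambda^+$-indecomposability dichotomy holds --- the same bookkeeping also yields the definable cross-embeddings of condition (e). I expect this to be the main obstacle: commutativity up to isomorphism is a priori much weaker than (A), and upgrading it to the statement that $U \times W$ is literally an ultrafilter is exactly what forces GCH, since without that hypothesis one should not expect (B) $\Rightarrow$ (A) to be a theorem of ZFC.
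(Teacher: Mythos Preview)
Your reading of condition (B) is the wrong one, and the paper explicitly warns against it. You take ``commute under the tensor product'' to mean \(U\otimes W \cong W\otimes U\) (Rudin--Keisler equivalence), but in the paper the intended condition is the literal equality \(U\ltimes W = U\rtimes W\) of two ultrafilters on \(X\times Y\); equivalently, \(U\ltimes W\) and \(W\ltimes U\) are isomorphic \emph{via the coordinate-swap bijection}, not via some arbitrary map. Your attempted upgrade (``an isomorphism of ultrafilters that commutes with the embeddings of \(V\) must be the identity on the ultrapowers'') is true but does not close the gap: having \(M_{U\ltimes W}=M_{W\ltimes U}\) and \(j_{U\ltimes W}=j_{W\ltimes U}\) does not force the seeds to line up. The paper gives the decisive counterexample: for \(U=W\) nonprincipal one trivially has \(U\ltimes W \cong W\ltimes U\), yet \(U\times U\) is never an ultrafilter and \(U\ltimes U\neq U\rtimes U\). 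So your implication (B)\(\Rightarrow\)(A) as stated is simply false, and no amount of cardinal counting in \(M_{U\otimes W}\) will repair it.

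Even with the correct (B), your sketch of (B)\(\Rightarrow\)(d) does not match the paper's argument and is too vague to stand on its own. The paper does not ``count cardinals inside \(M_{U\ltimes W}\)''; it passes through a symmetric notion of \emph{regularity} (\(F\) is \(G\)-regular iff \(F\ltimes G\) is incompatible with \(F\rtimes G\)), shows that \(U\ltimes W=U\rtimes W\) means \(U\) is \(W\)-nonregular, and then uses a Katetov-order argument together with a Prikry/Lipparini lemma relating \(\lambda^+\)-decomposability to \((\kappa,\lambda^+)\)-regularity to extract the \((\lambda,\lambda^+)\)-indecomposability dichotomy. You also have the role of GCH slightly off: the implication ``for all \(\lambda\), one of \(U,W\) is \((\lambda,2^\lambda)\)-indecomposable \(\Rightarrow\) \(U\times W\) is an ultrafilter'' is a ZFC theorem (essentially your \((\star)\) direction); GCH enters only because (B) yields \((\lambda,\lambda^+)\)-indecomposability, and one needs \(2^\lambda=\lambda^+\) to match the hypothesis of that ZFC theorem. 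Your characterization \((\star)\) of (A) is correct and is essentially the paper's \(U\)-closure condition in ultrapower language, so the soft equivalences (A)\(\Leftrightarrow\)(C) and (A)\(\Rightarrow\)(B) are fine.
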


We now define the two products in order to state this conjecture more
precisely.
\begin{defn}
    If \(F\) and \(G\) are filters on sets \(X\) and \(Y\), the \textit{Cartesian product}
    of \(F\) and \(G\), denoted \(F\times G\),
    is the filter on \(X\times Y\) generated by sets of the form
    \(A\times B\) where \(A\in F\) and \(B\in G\). 
\end{defn}

The usual notation for the tensor product of ultrafilters
is \(\otimes\), but the following notation is more convenient here:
\begin{defn}
    Suppose \(F\) and \(G\) are filters on sets \(X\) and \(Y\).
    Their \textit{left and right tensor products} are defined as follows:
    \begin{align*}
        F\ltimes G &= \{R\subseteq X\times Y : \{x\in X : (R)_x\in G\}\in F\}\\
        F\rtimes G &= \{R\subseteq X\times Y : \{y\in Y : (R)^y\in F\}\in G\}
    \end{align*}
\end{defn}
Here \((R)_x = \{y\in Y : (x,y)\in R\}\) and \((R)^y = \{x\in X : (x,y)\in R\}\).

Our main conjecture now reads:
\begin{conj}\label{conj:main_conj}
    For any ultrafilters \(U\) and \(W\), the following are equivalent:
    \begin{enumerate}[(1)]
        \item \(U\times W\) is an ultrafilter.\label{item:conj:product}
        \item \(U\ltimes W = U\rtimes W\).\label{item:conj:commute}
        \item \(U\ltimes W = U\times W\). \label{item:conj:equal}
    \end{enumerate}
\end{conj}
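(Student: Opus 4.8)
The plan is to prove the three conditions of Conjecture~\ref{conj:main_conj} equivalent under GCH by reducing the whole statement to a single substantive implication and then attacking that implication through the indecomposability characterization announced in the abstract.

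\paragraph{The easy edges and the reduction.} First I would observe that $U\times W\subseteq U\ltimes W$ and $U\times W\subseteq U\rtimes W$: on a generating rectangle $A\times B$ with $A\in U$ and $B\in W$ we have $\{x:(A\times B)_x\in W\}=A\in U$, so $A\times B\in U\ltimes W$, and $U\ltimes W$ is a filter; the other inclusion is symmetric. Since $U\ltimes W$ and $U\rtimes W$ are always ultrafilters, and a proper filter that contains an ultrafilter on the same underlying set must equal it, condition~(1) immediately gives $U\times W=U\ltimes W=U\rtimes W$, hence conditions~(2) and~(3); and condition~(3) gives condition~(1) because $U\ltimes W$ is an ultrafilter. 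So the conjecture reduces to the implication $(2)\Rightarrow(1)$.

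\paragraph{Two reformulations.} Writing $j_U\colon V\to M_U$ for the ultrapower embedding, {\L}o{\'s}'s theorem identifies the members of $U\ltimes W$ with the sets $\{(x,y):y\in A_x\}$ for which $[x\mapsto A_x]_U\in j_U(W)$, and such a set contains a rectangle with sides in $U$ and $W$ precisely when $j_U(B)\subseteq[x\mapsto A_x]_U$ in $M_U$ for some $B\in W$. Together with the reduction above this shows that condition~(1) is equivalent to the assertion that $j_U(W)$ is generated over $M_U$ by the pointwise image $j_U[W]=\{j_U(B):B\in W\}$, and, via the coordinate-swap isomorphism $U\times W\cong W\times U$, also to: $j_W(U)$ is generated over $M_W$ by $j_W[U]$. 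Separately I would record the standard presentation of the tensor product as an iterated ultrapower, $j_{U\ltimes W}=\ell\circ j_U$ with $\ell\colon M_U\to\mathrm{Ult}(M_U,j_U(W))$ and $j_{U\rtimes W}=m\circ j_W$ with $m\colon M_W\to\mathrm{Ult}(M_W,j_W(U))$, so that condition~(2) says exactly that these two iterates of $V$ coincide with matching images of the identity function on $X\times Y$.

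\paragraph{The main implication $(2)\Rightarrow(1)$.} I would route this through the indecomposability characterization. Step one: condition~(2) should imply that for every cardinal $\lambda$, at least one of $U$, $W$ is both $\lambda$-indecomposable and $\lambda^+$-indecomposable. If some $\lambda$ violated this, I would splice the relevant decomposing functions into a single relation $R\subseteq X\times Y$ lying in exactly one of $U\ltimes W$ and $U\rtimes W$; the clean cases (both factors decomposable at $\lambda$, or both at $\lambda^+$) reduce to the classical failure of commutation for two uniform ultrafilters on $\lambda$ via the relation $\{(\alpha,\beta):\alpha<\beta\}$, while the mixed case --- one factor decomposing at $\lambda$, the other only at $\lambda^+$ --- is the delicate one. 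Step two, the heart of the matter, is to show that this indecomposability hypothesis already forces $j_U(W)$ (say) to be generated by $j_U[W]$, i.e.\ condition~(1); here GCH enters, via a gap-one argument in the Silver--Prikry tradition, using simultaneous $\lambda$- and $\lambda^+$-indecomposability to bound, one successor step at a time, how far a family $\langle A_x\rangle$ representing a member of $j_U(W)$ can disperse, with GCH controlling the number of obstructions at each level so that the diagonalization terminates.

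\paragraph{Where the difficulty lies.} The obstacle is concentrated in step two: indecomposability is on its face far weaker than completeness, and extracting the precise ``generated by pointwise images'' conclusion from it is the real content of the theorem --- and the reason GCH, rather than ZFC, is assumed, since the gap-one argument that drives it requires the cardinal arithmetic. The remaining friction is bookkeeping: handling the mixed decomposability case in step one, matching seeds rather than merely ultrapowers when unpacking condition~(2), and tracking which of $U$ and $W$ is the well-behaved factor at a given $\lambda$, a choice that may vary with $\lambda$.
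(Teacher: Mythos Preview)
Your reduction and overall architecture match the paper: the easy edges give $(1)\Leftrightarrow(3)$ and $(1)\Rightarrow(2)$, so everything rests on $(2)\Rightarrow(1)$, and the route is through indecomposability. But you have inverted the difficulty of the two steps, mislocated where GCH enters, and your proposed mechanism for step two is not what makes the argument go.

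Step two is the \emph{easy} half, and it is pure ZFC in the form the paper proves it: if for every $\lambda$ one of $U,W$ is $(\lambda,2^\lambda)$-indecomposable, then $U\times W$ is an ultrafilter. No Silver--Prikry diagonalization is involved. The proof is a short minimization. Assuming $U\times W$ is not an ultrafilter, pass to $\tilde U\leq_{\text{RK}} U$ on a set $X$ of least cardinality with $\tilde U\times W$ still not an ultrafilter, then to $\tilde W\leq_{\text{RK}} W$ on $Y$ of least cardinality with $\tilde U\times\tilde W$ not an ultrafilter. A sequence $\langle A_y\rangle_{y\in Y}\subseteq\tilde U$ witnessing the failure of closure is a map $Y\to P(X)$, and the pushforward of $\tilde W$ along this map still witnesses failure, so minimality forces $|Y|\leq 2^{|X|}$; symmetrically $|X|\leq 2^{|Y|}$. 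With $\lambda=\min(|X|,|Y|)$, both $\tilde U$ and $\tilde W$ are uniform on sets of cardinality in $[\lambda,2^\lambda]$, so neither $U$ nor $W$ is $(\lambda,2^\lambda)$-indecomposable.

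Step one is where the substance is, and it is also ZFC: $(2)$ implies that for all $\lambda$ one of $U,W$ is $(\lambda,\lambda^+)$-indecomposable. The paper does not build the separating relation by hand. It first shows $U\ltimes W=U\rtimes W$ is equivalent to the symmetric condition ``$U$ is $W$-nonregular'' (equivalently $F\ltimes G$ and $F\rtimes G$ are compatible), and observes that nonregularity is downward closed in the Katetov order. So if $W$ is $\lambda$-decomposable, then $U$ is $(\lambda,\lambda)$-nonregular, hence $\lambda$- and $\mathrm{cf}(\lambda)$-indecomposable. The mixed case you flag as delicate is then dispatched by a lemma of Prikry (generalized by Lipparini): any $\lambda^+$-decomposable ultrafilter is either $\mathrm{cf}(\lambda)$-decomposable or $(\kappa,\lambda^+)$-regular for some $\kappa<\lambda$, and either alternative contradicts $(\lambda,\lambda)$-nonregularity. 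GCH enters only at the seam, identifying the $\lambda^+$ produced by step one with the $2^\lambda$ required by step two.
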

The implication from \ref{item:conj:product} to \ref{item:conj:commute} 
and the equivalence of \ref{item:conj:product} and \ref{item:conj:equal} are easy to see:
\(U\times W\) is certainly contained in both \(U\ltimes W\) and \(U\rtimes W\),
so if \(U\times W\) is an ultrafilter, its maximality implies that
\(U\ltimes W = U\times W = U\rtimes W\).
There is really just one open question: 
\begin{qst}
    Suppose \(U\) and \(W\) are ultrafilters
    such that \(U\ltimes W = U\rtimes W\). Must \(U\times W\) be an ultrafilter?
\end{qst}
We will answer this question positively assuming the Generalized Continuum Hypothesis:
\begin{thm}[GCH]\label{thm:commutative_productive}
    Suppose \(U\) and \(W\) are ultrafilters. Then \(U\ltimes W = U\rtimes W\) if and only
    if \(U \times W\) is an ultrafilter.
\end{thm}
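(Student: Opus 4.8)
The backward implication needs no hypotheses and is the one recorded in the introduction: \(U\times W\) is contained in both ultrafilters \(U\ltimes W\) and \(U\rtimes W\), so if \(U\times W\) happens to be an ultrafilter, maximality forces \(U\ltimes W=U\times W=U\rtimes W\). The plan for the forward implication is to prove the contrapositive: starting from a set witnessing that \(U\times W\) is \emph{not} an ultrafilter, I will manufacture a set belonging to \(U\ltimes W\) but not to \(U\rtimes W\).

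The first and main step is to convert the bare failure of \(U\times W\) to be an ultrafilter into a \emph{simultaneous decomposition} of \(U\) and \(W\). Fix \(R\subseteq X\times Y\) with neither \(R\) nor its complement in \(U\times W\); replacing \(R\) by its complement if need be, we may assume \(A_0=\{x\in X:(R)_x\in W\}\in U\), so that \(R\in U\ltimes W\). Since no rectangle from \(U\times W\) is contained in \(R\), for every \(A\in U\) with \(A\subseteq A_0\) the set \(\bigcap_{x\in A}(R)_x\) is \(W\)-null; dually, the \(W\)-large set covered by the \(W\)-null sets \(Y\setminus(R)_x\) (\(x\in A_0\)) admits no subcover of small size. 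Tracking the least cardinality of a \(U\)-large subfamily of \(\{(R)_x:x\in A_0\}\) whose intersection is \(W\)-null, and the least size of a cover of a \(W\)-large set by \(W\)-null sets drawn from the \(Y\setminus(R)_x\), one produces a single cardinal \(\lambda\) together with maps \(f:X\to\lambda\) and \(g:Y\to\lambda\) for which \(f_*(U)\) and \(g_*(W)\) are both uniform on \(\lambda\); that is, \(U\) and \(W\) are decomposable onto a common \(\lambda\) (or, in a boundary case, onto a common \(\lambda^+\)). This is the only place the Generalized Continuum Hypothesis is used, and it is the reason the unrestricted conjecture remains open: the two cardinal parameters extracted from \(U\) and from \(W\) need not a priori agree, and GCH is what pins down the cardinal arithmetic — presumably together with the behaviour of indecomposability at singular cardinals — so that a single value of \(\lambda\), or the adjacent pair \(\lambda,\lambda^+\), serves for both ultrafilters at once.

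Given such \(f\), \(g\), and \(\lambda\), the remaining step is a one-line construction. Put
\[
S=\{(x,y)\in X\times Y:f(x)\le g(y)\}.
\]
For each \(x\in X\) the section \((S)_x=g^{-1}\bigl([f(x),\lambda)\bigr)\) is the preimage of a tail of \(\lambda\), hence lies in the uniform ultrafilter \(g_*(W)\); so \((S)_x\in W\) for every \(x\), giving \(\{x:(S)_x\in W\}=X\in U\) and \(S\in U\ltimes W\). On the other hand, for each \(y\in Y\) the section of the complement, \(\bigl((X\times Y)\setminus S\bigr)^{y}=f^{-1}\bigl((g(y),\lambda)\bigr)\), is again the preimage of a tail of \(\lambda\) and so lies in the uniform ultrafilter \(f_*(U)\); thus \((X\times Y)\setminus S\in U\rtimes W\), and since \(U\rtimes W\) is an ultrafilter, \(S\notin U\rtimes W\). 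Hence \(U\ltimes W\ne U\rtimes W\), which is the contrapositive we wanted.

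The anticipated obstacle is entirely in the first step: the order-type gadget \(S\) is essentially forced once a common decomposition is in hand, but extracting that common decomposition — reconciling the possibly distinct ``completeness'' cardinals of \(U\) and of \(W\), handling singular \(\lambda\) and the \(\lambda\) versus \(\lambda^+\) alternation, and locating precisely where GCH must enter — is where the real work lies.
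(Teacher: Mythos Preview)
Your backward direction is fine, and your ``step two'' is correct: if both \(U\) and \(W\) are \(\lambda\)-decomposable for a single \(\lambda\), the set \(S=\{(x,y):f(x)\le g(y)\}\) does witness \(U\ltimes W\ne U\rtimes W\). This is essentially the content of \cref{lma:regular_incompatible} specialized to the Fr\'echet filter on \(\lambda\).

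The gap is exactly where you say it is, but it is a real gap, not just a detail to be filled in. You have not proved your ``first step,'' and in fact the claim that one can always extract a \emph{single} common \(\lambda\) is stronger than what the paper establishes and would itself require the machinery you are trying to avoid. The paper does not proceed by finding a common decomposition cardinal. Instead it isolates an intermediate criterion --- for every cardinal \(\lambda\), one of \(U,W\) is \((\lambda,\lambda^+)\)-indecomposable --- and proves two implications through it. One direction (\cref{thm:indecomposable_product}) is a minimality argument close to your sketch, but it only shows the two ultrafilters decompose at cardinals in an interval \([\lambda,2^\lambda]\), not at the same cardinal. The other direction (\cref{thm:nonregular_indecomposable}) is where the real idea lives: from \(U\ltimes W=U\rtimes W\) one gets that \(U\) is \(W\)-nonregular, hence \((\lambda,\lambda)\)-nonregular whenever \(W\) is \(\lambda\)-decomposable (\cref{cor:filter_indecomp_nonreg}), and then a lemma of Prikry/Lipparini type (\cref{lma:prikry}) upgrades \((\lambda,\lambda)\)-nonregularity to \((\lambda,\lambda^+)\)-indecomposability. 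This last step is precisely what handles the ``\(\lambda\) versus \(\lambda^+\) alternation'' and the singular case you flag as obstacles; GCH then matches \(\lambda^+\) with \(2^\lambda\) to close the loop.

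So the missing idea is the Prikry-type lemma together with the decision to work with the ``for all \(\lambda\)'' indecomposability criterion rather than hunting for one common \(\lambda\). Your contrapositive strategy could in principle be completed, but doing so would amount to reproving those results and then doing extra work to collapse the \(\{\lambda,\lambda^+\}\) ambiguity to a single cardinal --- work the paper's route simply sidesteps.
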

It is \textit{not}
true that if \(U\ltimes W\) and \(W\ltimes U\) are Rudin-Keisler equivalent, 
then \(U\times W\) is an ultrafilter.
For example, if \(U = W\), then \(U\times W\) is never an ultrafilter, but
obviously \(U\ltimes W = W\ltimes U\). So it is not enough that 
\(U\ltimes W\RKE W\ltimes U\); one also that this equivalence is witnessed by the
coordinate-swapping bijection from \(X\times Y\) to \(Y\times X\) where \(X\) and \(Y\) are the
underlying sets of \(U\) and \(W\).

The theorem is proved by isolating a combinatorial criterion that
arguably answers \cref{qst:main_questions} under GCH.
\begin{defn}\label{defn:indecomposability}
    A filter \(F\) is \textit{\((\lambda,\eta)\)-indecomposable} if for any family of sets
    sets \(\mathcal P\) of cardinality at most \(\eta\) with 
    \(\bigcup \mathcal P\in U\), there is some \(\mathcal Q\subseteq \mathcal P\)
    of cardinality less than \(\lambda\) such that \(\bigcup Q\in U\).
\end{defn}

\begin{repthm}{thm:indecomposable_product}
    Suppose \(U\) and \(W\) are ultrafilters such that for all cardinals \(\lambda\),
    either \(U\) or \(W\) is \((\lambda,2^\lambda)\)-indecomposable. 
    Then \(U\times W\) is an ultrafilter.
\end{repthm}

\begin{repthm}{thm:nonregular_indecomposable}
    Suppose \(U\) and \(W\) are ultrafilters such that \(U\ltimes W = U\rtimes W\). 
    Then for all cardinals \(\lambda\),
    either \(U\) or \(W\) is \((\lambda,\lambda^+)\)-indecomposable.
\end{repthm}

In addition, we will provide a third, somewhat more surprising condition
equivalent to the statement that \(U \times W\) is an ultrafilter. 
This involves
a variant of the Mitchell order called the \textit{internal relation}.
Recall that the Mitchell order is 
defined on countably complete ultrafilters \(U\) and \(W\)
by setting \(U\mo W\) if \(U\) belongs to the ultrapower of the universe by \(W\),
which will be denoted by \(M_W\).
The question
of whether \(U\in M_W\) only makes sense when \(W\) is countably complete,
so that \(M_W\) can be identified with a transitive class.
The internal relation is a 
combinatorial proxy
for the Mitchell order that in particular
makes sense regardless of 
the completeness of the ultrafilters in question. 
\begin{defn}\label{defn:internal_relation}
    The \textit{internal relation} is defined on
    ultrafilters \(U\) and \(W\) by setting \(U\I W\) if
    \(j_U\restriction M_W\) is isomorphic to an 
    internal ultrapower embedding of \(M_W\).\footnote{
        This means that there is an ultrafilter \(\tilde U\)
        of \(M_W\) and an isomorphism
        \(k : (M_{\tilde U})^{M_W}\to j_U(M_W)\)
        such that \(k\circ (j_{\tilde U})^{M_W}= j_U\).
    }
\end{defn}
Here \(j_U : V\to M_U\) denotes the ultrapower embedding,
as it will throughout the paper.

One advantage of the internal relation over
the Mitchell order is that the former relation is more amenable 
to algebraic or diagrammatic techniques.
This is exploited in the author's thesis to 
give a complete analysis of the internal relation 
on countably complete ultrafilters
assuming the \textit{Ultrapower Axiom}.
The Ultrapower Axiom is a combinatorial principle motivated by inner model theory
that enables one to develop a theory of countably complete
ultrafilters that is far more detailed than the
theory available assuming ZFC alone. 
The theory of countably complete ultrafilters
encompasses a major part of the theory of large cardinals,
and so the Ultrapower Axiom is a natural setting for large cardinal theory.\footnote{
    It is unknown, however, whether the Ultrapower Axiom is consistent
with large cardinals beyond a superstrong cardinal, or more generally
with large cardinals for which a canonical inner model theory has not been developed.
The author conjectures that the Ultrapower Axiom is consistent
with all large cardinal axioms.}

\cref{conj:main_conj} was originally motivated by the following consequence of UA:
\begin{thm*}[UA]\label{thm:ua_to_main_conj}
    For any countably compete ultrafilters \(U\) and \(W\), 
    the following are equivalent:
    \begin{enumerate}[(1)]
        \item \(U\times W\) is an ultrafilter.\label{item:ua:product}
        \item \(U\ltimes W = U\rtimes W\).\label{item:ua:commute}
        \item \(U\I W\) and \(W\I U\).\label{item:ua:cointernal}        
        \qed
    \end{enumerate}
\end{thm*}

We show here that this theorem can be proved for arbitrary ultrafilters
using GCH instead of UA:
\begin{repthm}{thm:GCH_to_main_conj}[GCH]
    For any ultrafilters \(U\) and \(W\), \(U\times W\) is an ultrafilter
    if and only if \(U\I W\) and \(W\I U\).
\end{repthm}
The proof of \cref{thm:GCH_to_main_conj}
is motivated by ideas from the UA theory,
and more specifically it makes use of
the two main tools from \cite{UA}, the Ketonen order and
the factorization of ultrafilters.
Arguably, the theorem could not have
proved without the help of the Ultrapower Axiom, 
even though the Ultrapower Axiom makes no appearance in the proof.
\section{Preliminaries}
\subsection{Ultrafilters and the Rudin-Keisler order}
If \(U\) is an ultrafilter on a set \(X\) and
\(f : X\to Y\) is a function, the \textit{pushforward}
of \(U\) by \(f\) is the ultrafilter \(f_*(U) = \{A\subseteq Y : f^{-1}[A]\in U\}\).
If \(W = f_*(U)\), then there is a unique elementary embedding \(k : M_W\to M_U\)
such that \(k\circ j_W = j_U\) and \(k(\id_W) = [f]_U\). Conversely,
if there is an elementary embedding \(k : M_W\to M_U\) such that \(k\circ j_W = j_U\),
then \(W = f_*(U)\) for some \(f\). In this case
(that is, if \(W\) is a pushforward of \(U\)), we say that \(W\) precedes
\(U\) in the \textit{Rudin-Keisler order,} and write \(W\RK U\).

Suppose \(M\) is a model of ZFC and \(X\in M\). An \textit{\(M\)-ultrafilter on \(X\)}
is an ultrafilter over the the Boolean algebra \(P^M(X)\). Given an \(M\)-ultrafilter \(U\) on \(X\),
one can form the associated ultrapower, denoted \(M_U^M\), by taking the quotient \(M^X\cap M\)
(the class of functions on \(X\) as computed in \(M\))
under the equivalence relation of \(U\)-almost everywhere equality
(which may not be definable over \(M\)). The equivalence class of \(f\in M^X\cap M\)
is denoted by \([f]_U^M\).
The associated ultrapower embedding
\(j_U^M : M\to M_U^M\) is given by \(j(s) = [c_s]_U^M\) 
where \(c_s : X\to \{s\}\) is the constant function.

Suppose \(j : M\to N\) is an elementary embedding.
For any \(X\in M\) and \(a\in N\) such that \(N\vDash a\in j(X)\), 
the \textit{\(M\)-ultrafilter on \(X\) derived from \(j\) using \(a\)},
is the ultrafilter \(D_X(j,a) = \{A\in P^M(X) : N\vDash a\in j(A)\}\).
Letting \(D = D_X(j,a)\), there is again a canonical factor embedding \(k : M_{D}^M\to N\),
denoted \(k_{a,j}\),
uniquely determined by requiring \(k\circ j_D = j\) and \(k(\id_D) = a\).
Now if \(j : M\to N\) is an ultrapower embedding, there is some \(X\in M\)
and \(a\in j(X)\) such that every element of \(N\) is definable in \(N\)
from parameters in \(j[M]\cup \{a\}\). Let \(D= D_X(j,a)\).
Then \(k_{a,j}: M_D^M\to N\) is surjective since its range contains 
\(j[M]\cup \{a\}\), and so \(k\) is an isomorphism
Note that if \(f_*(U) = W\), then \(W\) is the ultrafilter derived from \(j_U\) using \(a= [f]_U\),
and \(k_{a,j_U}\) is the canonical factor embedding.

\subsection{The symmetry of regularity}
Part of the motivation for this work was to
explain the connection between a theorem from Blass's thesis \cite[Theorem 3.6]{Blass}
and Kunen's Commuting Ultrapowers Lemma \cite[Lemma 1.1.25]{Larson}. These
lemmas concern two binary relations on ultrafilters 
that despite appearances turn out to be symmetric.
\begin{defn}
    Suppose \(F\) and \(G\) are filters on sets \(X\) and \(Y\).
    \begin{itemize}
        \item \(G\) is \textit{\(F\)-regular} if there is a sequence
        \(\langle B_x : x\in X\rangle\subseteq G\) such that for any \(A\in F^+\),
        \(\bigcap_{x\in A}B_x = \emptyset\).
        \item \(G\) is \textit{\(F\)-closed} if for any sequence
        \(\langle B_x : x\in X\rangle\subseteq G\), there is some \(A\in F\) such that
        \(\bigcap_{x\in A}B_x\in G\).
    \end{itemize}
\end{defn}
In this section, we show that our main question (\cref{qst:main_questions}) 
reduces to the problem of whether, given an ultrafilter \(U\), 
every \(U\)-nonregular ultrafilter is \(U\)-closed. 

The concept of regularity is a natural generalization of the classical concept
of a regular ultrafilter:
\begin{defn}
    If \(\kappa\leq \lambda\) are cardinals, 
    a filter \(G\) is \textit{\((\kappa,\lambda)\)-regular} 
    if there is a sequence \(\langle B_\alpha : \alpha < \lambda\rangle\) such that
    for any \(\sigma\subseteq \lambda\) with \(|\sigma|\geq \kappa\),
    \(\bigcap_{\alpha\in \sigma}B_\alpha = \emptyset\).
\end{defn}
In other words, \(G\) is \((\kappa,\lambda)\)-regular if
it is 
    \(F_{\kappa}(\lambda)\)-regular where \(F_{\kappa}(\lambda)\) 
    is the dual of the ideal 
    \(P_\kappa(\lambda) = \{\sigma \subseteq \lambda : |\sigma| < \kappa\}\).

The concept of closure is extracted from the following theorem from Blass's thesis:
\begin{thm}[Blass, {\cite[Theorem 3.6]{Blass}}]\label{thm:blass_completeness}
    Suppose \(U\) and \(W\) are ultrafilters on sets \(X\) and \(Y\).
    Then the following are equivalent:
    \begin{enumerate}[(1)]
        \item \label{item:product_uf}\(U\times W\) is an ultrafilter. 
        \item \label{item:U_closed}\(W\) is \(U\)-closed. 
        \item \label{item:W_closed}\(U\) is \(W\)-closed.
    \end{enumerate}
\end{thm}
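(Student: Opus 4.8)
The plan is to prove the equivalence of \ref{item:product_uf} and \ref{item:U_closed} directly, and then deduce the equivalence with \ref{item:W_closed} by symmetry. The symmetry is that the coordinate-swapping bijection \(\tau : X\times Y\to Y\times X\) pushes \(U\times W\) forward to \(W\times U\), so \(U\times W\) is an ultrafilter if and only if \(W\times U\) is; meanwhile condition \ref{item:W_closed} for the pair \((U,W)\) is literally condition \ref{item:U_closed} for the pair \((W,U)\). Hence it suffices to prove ``\ref{item:product_uf} \(\iff\) \ref{item:U_closed}'' for every pair of ultrafilters. Throughout I will use the dictionary between a set \(R\subseteq X\times Y\) and its sequence of vertical sections \(\langle (R)_x : x\in X\rangle\): a rectangle \(A\times B\) is contained in \(R\) exactly when \(B\subseteq (R)_x\) for all \(x\in A\), so that \(R\in U\times W\) if and only if there are \(A\in U\) and \(B\in W\) with \(B\subseteq \bigcap_{x\in A}(R)_x\).

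For \ref{item:U_closed} \(\Rightarrow\) \ref{item:product_uf}, fix \(R\subseteq X\times Y\); I must show that \(R\) or its complement \(R^c\) lies in \(U\times W\). Since \(W\) is an ultrafilter, \(X\) is partitioned by \(A_0 = \{x : (R)_x\in W\}\) and \(A_1 = \{x : (R^c)_x \in W\}\), and since \(U\) is an ultrafilter exactly one of these belongs to \(U\); assume \(A_0\in U\), the other case being identical with \(R^c\) in place of \(R\). Define \(B_x = (R)_x\) for \(x\in A_0\) and \(B_x = Y\) otherwise, so that \(\langle B_x : x\in X\rangle\subseteq W\). Applying the hypothesis that \(W\) is \(U\)-closed, fix \(A\in U\) with \(\bigcap_{x\in A}B_x\in W\), and set \(A' = A\cap A_0\in U\) and \(B = \bigcap_{x\in A'}(R)_x\). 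Then \(B\supseteq \bigcap_{x\in A}B_x\), so \(B\in W\), and \(A'\times B\subseteq R\) by construction; hence \(R\in U\times W\).

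For \ref{item:product_uf} \(\Rightarrow\) \ref{item:U_closed}, let \(\langle B_x : x\in X\rangle\subseteq W\) be given and set \(R = \bigcup_{x\in X}\{x\}\times B_x\), so that \((R)_x = B_x\) for every \(x\). Since \(U\times W\) is an ultrafilter, either \(R\) or \(R^c\) lies in it. The latter is impossible: if \(A\times B\subseteq R^c\) with \(A\in U\) and \(B\in W\), then for any \(x\in A\) we would have \(B\subseteq (R^c)_x = Y\setminus B_x\), contradicting \(B\cap B_x\in W\). So \(R\in U\times W\), which gives \(A\in U\) and \(B\in W\) with \(B\subseteq \bigcap_{x\in A}B_x\); in particular \(\bigcap_{x\in A}B_x\in W\), as required. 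Together with the symmetry remark above this completes the proof. There is no serious obstacle here; the only points needing care are the bookkeeping in the case split of the first implication --- one must intersect the set \(A\) produced by \(U\)-closure with the set \(A_0\) on which the sections already belong to \(W\) --- and the verification that the coordinate swap genuinely identifies \ref{item:W_closed} for \((U,W)\) with \ref{item:U_closed} for \((W,U)\).
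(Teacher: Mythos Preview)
Your proof is correct and follows essentially the same approach as the paper: reduce to the equivalence of \ref{item:product_uf} and \ref{item:U_closed} via the symmetry of \ref{item:product_uf}, and then use the correspondence between relations \(R\subseteq X\times Y\) and \(X\)-indexed sequences of sections. The only cosmetic difference is that the paper packages your direct argument as a separate lemma --- \(G\) is \(F\)-closed iff \(F\ltimes G = F\times G\) --- and then appeals to maximality of \(U\times W\) inside the ultrafilter \(U\ltimes W\); your two implications are exactly the unpacking of that lemma together with the maximality step.
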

Although the symmetry expressed in \cref{thm:blass_completeness} \ref{item:U_closed} and \ref{item:W_closed} 
does not extend to filters, one can deduce Blass's Theorem from a more general
filter-theoretic fact:
\begin{lma}\label{lma:filter_closure}
    Suppose \(F\) and \(G\) are filters. Then \(G\) is \(F\)-closed if and only if
    \(F\ltimes G = F\times G\).
    \begin{proof}
        In general, \(F\times G\subseteq F\ltimes G\), so it suffices to prove that
        the reverse inclusion is equivalent to the \(F\)-closure of \(G\).
        This is a routine matter of applying the correspondence between 
        binary relations \(R\subseteq X\times Y\) and \(X\)-indexed sequences
        \(\langle B_x : x\in X\rangle\subseteq Y\).
    \end{proof}
\end{lma}

\begin{proof}[Proof of \cref{thm:blass_completeness}]
    It suffices to prove the equivalence of \ref{item:product_uf} and \ref{item:U_closed},
    since the obvious symmetry of \ref{item:product_uf} in \(U\) and \(W\)
    then immediately implies the equivalence
    of \ref{item:U_closed} and \ref{item:W_closed}. 
    If \(U\times W\) is an ultrafilter, then since \(U\times W\) is 
    a maximal filter contained
    in \(U\ltimes W\), \(U\times W = U\ltimes W\), and hence \(W\) is \(U\)-closed
    by \cref{lma:filter_closure}. Conversely if \(W\) is \(U\)-closed,
    then \(U\times W = U\ltimes W\) by \cref{lma:filter_closure},
    and so since \(U\ltimes W\) is an ultrafilter, \(U\times W\) is.
\end{proof}
It turns out that regularity, like closure, is also a symmetric relation, but this time
the symmetry does extend to all filters. Despite the extensive
literature on regular ultrafilters, as far as the author knows, this symmetry
has never been observed. 
\begin{lma}\label{lma:regular_incompatible}
    Suppose \(F\) and \(G\) are filters. Then \(F\) is \(G\)-regular 
    if and only if \(F\ltimes G\) is incompatible with \(F\rtimes G\).
    \begin{proof}
        First assume \(F\) is \(G\)-nonregular.
        Fix \(R,S\subseteq X\times Y\) with \(R\in F\ltimes G\) and
        \(S\in F\rtimes G\), and let us show that \(R\cap S\) is nonempty.
        Take \(A\in F\) such that for all \(x\in A\), \((R)_x\in G\). 
        Note that since \(S\in F\rtimes G\), for \(G\)-almost all \(y\in Y\),
        \((S)^y\in F\), and hence \((S)^y\cap A\in F\). 
        Consider the sequence \(\langle (S)^y\cap A : y\in Y\rangle\). 
        Since \(F\) is \(G\)-nonregular, there is a \(G\)-positive
        set \(B\) such that \(\bigcap_{y\in B} (S)^y\cap A\neq \emptyset\).
        Fix \(x\in \bigcap_{y\in B} (S)^y\cap A\).
        Since \(x\in A\), \((R)_x\in G\), and therefore 
        \((R)_x\cap B\neq \emptyset\). Since \(x\in \bigcap_{y\in B} (S)^y\), 
        for any \(y\in (R)_x\cap B\),
        \(x\in (S)^y\); but then \((x,y)\in R\cap S\).

        Conversely, suppose \(F\) is \(G\)-regular, and
        let \(\langle A_y : y\in Y\rangle\subseteq F\) witnesses this.
        Let \(B_x = \{y\in Y : x\notin A_y\}\).
        We claim that \(B_x\in G\) for all \(x\in X\).
        This is because \(x\in \bigcap_{y\in Y\setminus B_x} A_y\),
        so \(Y\setminus B_x\notin G^+\), and hence
        \(B_x\in G\).
        But let \(\mathcal A = \{(x,y) : x\in A_y\}\) and \(\mathcal B = \{(x,y) : y\in B_x\}\).
        Then \(\mathcal A\in F\rtimes G\), \(\mathcal B \in F\ltimes G\), and 
        \(\mathcal A = (X\times Y)\setminus \mathcal B\). This shows \(F\rtimes G\) is incompatible with \(F\ltimes G\).
    \end{proof}
\end{lma}
\begin{thm}
    Suppose \(F\) and \(G\) are filters. Then \(F\) is \(G\)-regular
    if and only if \(G\) is \(F\)-regular.\qed
\end{thm}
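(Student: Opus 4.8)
The plan is to reduce this to \cref{lma:regular_incompatible}, which already does the real work: once that lemma is in hand, the symmetry of regularity is just the symmetry of incompatibility of the two tensor products, and that in turn becomes visible after applying the coordinate-swapping bijection. Let \(\theta : X\times Y\to Y\times X\) be the bijection \(\theta(x,y) = (y,x)\), where \(X\) and \(Y\) carry \(F\) and \(G\) respectively.

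First I would compute the pushforwards of the two tensor products along \(\theta\). Unwinding the definitions of \(\ltimes\) and \(\rtimes\) — the one point requiring care is that \(\theta\) interchanges the two section operations \((R)_x\) and \((R)^y\) — one finds \(\theta_*(F\ltimes G) = G\rtimes F\) and \(\theta_*(F\rtimes G) = G\ltimes F\). This bookkeeping is the only genuinely computational step, and it is entirely routine; the only thing to watch is the direction of the section notation when chasing a set through \(\theta\).

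Next I would observe that a bijection preserves incompatibility of filters in both directions: if \(A_1\in\mathcal F_1\) and \(A_2\in\mathcal F_2\) are disjoint, then so are \(\theta[A_1]\in\theta_*(\mathcal F_1)\) and \(\theta[A_2]\in\theta_*(\mathcal F_2)\), and conversely since \(\theta\) is invertible. Combining this with the previous paragraph, \(F\ltimes G\) is incompatible with \(F\rtimes G\) if and only if \(G\ltimes F\) is incompatible with \(G\rtimes F\). Feeding both sides through \cref{lma:regular_incompatible} then yields exactly the desired statement: \(F\) is \(G\)-regular if and only if \(G\) is \(F\)-regular.

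I expect no real obstacle here. As a sanity check — and perhaps worth including as a remark, since it makes the symmetry completely transparent — one can also argue the implication directly, bypassing the lemma: given a sequence \(\langle A_y : y\in Y\rangle\subseteq F\) witnessing that \(F\) is \(G\)-regular, set \(B_x = \{y\in Y : x\notin A_y\}\); since \(x\in\bigcap_{y\in Y\setminus B_x}A_y\), the regularity of this sequence forces \(Y\setminus B_x\notin G^+\), so \(B_x\in G\), while for any \(A\in F^+\) a point \(y\in\bigcap_{x\in A}B_x\) would satisfy \(A\cap A_y = \emptyset\), contradicting \(A\cap A_y\in F^+\). Hence \(\langle B_x : x\in X\rangle\) witnesses that \(G\) is \(F\)-regular, and the equivalence follows by symmetry of this construction.
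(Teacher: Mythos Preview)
Your proposal is correct and matches the paper's approach exactly: the paper marks the theorem with \qed\ immediately after \cref{lma:regular_incompatible}, relying on precisely the coordinate-swap observation you spell out, namely that \(\theta_*(F\ltimes G) = G\rtimes F\) and \(\theta_*(F\rtimes G) = G\ltimes F\), so incompatibility on one side transfers to the other. Your direct sanity-check argument is also fine (indeed it is essentially the second half of the proof of \cref{lma:regular_incompatible} rerun), but the lemma-based route is the intended one.
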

For ultrafilters, we of course get more. 
\begin{thm}\label{thm:nonregular_commutes}
    If \(U\) and \(W\) are ultrafilters, then 
    \(U\) is \(W\)-nonregular if and only if \(U\ltimes W = U\rtimes W\).\qed
\end{thm}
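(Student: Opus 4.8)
The plan is to reduce the statement immediately to \cref{lma:regular_incompatible} together with the trivial observation that two ultrafilters on a common set are compatible exactly when they are equal. So the first step is to check that when \(U\) and \(W\) are \emph{ultrafilters} on sets \(X\) and \(Y\), both \(U\ltimes W\) and \(U\rtimes W\) are ultrafilters on \(X\times Y\). For \(U\ltimes W\): given \(R\subseteq X\times Y\) and \(x\in X\), exactly one of \((R)_x\) and \(Y\setminus (R)_x = (X\times Y\setminus R)_x\) lies in \(W\), so the sets \(\{x : (R)_x\in W\}\) and \(\{x : (X\times Y\setminus R)_x\in W\}\) partition \(X\); since \(U\) is an ultrafilter, exactly one of them lies in \(U\), i.e.\ exactly one of \(R\) and its complement lies in \(U\ltimes W\). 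Closure under supersets and finite intersections is routine, and the same argument handles \(U\rtimes W\). This is the one place where the hypothesis that \(U,W\) are ultrafilters (not merely filters) genuinely enters.

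The second step is the elementary fact that distinct ultrafilters on a common set \(Z\) are incompatible: if \(\mathcal A\neq\mathcal B\), choose \(A\in\mathcal A\setminus\mathcal B\); then \(Z\setminus A\in\mathcal B\) and \(A\cap(Z\setminus A)=\emptyset\). Conversely, equal filters are trivially compatible. Hence, for the ultrafilters \(U\ltimes W\) and \(U\rtimes W\) produced in step one, \(U\ltimes W = U\rtimes W\) if and only if \(U\ltimes W\) is compatible with \(U\rtimes W\).

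Combining these, I would conclude: by \cref{lma:regular_incompatible}, \(U\ltimes W\) is incompatible with \(U\rtimes W\) precisely when \(U\) is \(W\)-regular; negating both sides and using the previous paragraph, \(U\ltimes W = U\rtimes W\) precisely when \(U\) is \(W\)-nonregular. There is no real obstacle: the substantive combinatorics was already carried out in \cref{lma:regular_incompatible}, and the remaining work is just the two bookkeeping observations above about ultrafilters on \(X\times Y\).
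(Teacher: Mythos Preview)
Your proposal is correct and is exactly the argument the paper intends: the theorem is marked with a bare \qed because it follows immediately from \cref{lma:regular_incompatible} once one notes that \(U\ltimes W\) and \(U\rtimes W\) are ultrafilters on \(X\times Y\), so that compatibility and equality coincide. The two bookkeeping observations you supply are precisely what the paper leaves implicit.
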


Let us just mention one more familiar characterization of regularity.
\begin{defn}
    If \(F\) is a family of sets, the \textit{dual of \(F\)}
    is the family \(F^*\) of sets of the form \((\bigcup F)\setminus A\)
    for \(A\in F\). The \textit{fine
    filter on the dual of \(F\)}, denoted \(\mathbb F(F)\),
    is the filter on \(F^*\) generated by sets of the form
    \(\{\sigma\in F^* : x\in \sigma\}\) where \(x\in \bigcup F\).
\end{defn}
    The prototypical special case of this concept arises when 
    \(F = F_{\kappa}(\lambda)\) is the filter of subsets of \(\lambda\)
    with complement of cardinality less than \(\kappa\),
    so that \(\mathbb F(F)\) is the fine filter on \(P_\kappa(\lambda)\).
    Note that if \(\bigcap F\neq \emptyset\), then \(\mathbb F(F)\) is improper.

    The Katetov order is the natural generalization of the Rudin-Keisler order
    to filters:
\begin{defn}
    Suppose \(F\) is a filter on \(X\) and \(G\) is a filter on \(Y\). 
    The \textit{Katetov order} is defined by setting \(G\kat F\)
    if there is a function \(f : X\to Y\) such that \(G\subseteq f_*(F)\).
\end{defn}

\begin{prp}\label{prp:regular_fine}
    A filter \(F\) is \(G\)-regular if and only if \(\mathbb F(F)\kat G\).
    \begin{proof}
        Let \(X\) be the underlying set of \(F\).
        Notice that \(\langle A_y : y\in Y\rangle\) witnesses that \(F\) is
        \(G\)-regular if and only if the function \(f(y) = X\setminus A_y\)
        pushes \(G\) forward to \(\mathbb F(F)\).
    \end{proof}
\end{prp}

An immediate consequence of \cref{prp:regular_fine} is that
the \(F\)-regular filters are closed upwards in the Katetov order, or dually:
\begin{lma}\label{lma:regular_katetov}
    Suppose \(F, G\) and \(H\) are filters such that \(F\) is \(G\)-regular
    and \(G\kat H\).
    Then \(F\) is \(H\)-regular.\qed
\end{lma}
Put another way, if \(F\) is \(H\)-nonregular, then the class of \(F\)-regular filters
contains no Katetov predecessors of \(H\).

Finally, we explain the connection between the combinatorial structure
we have been exploring and Kunen's Commuting Ultrapowers Lemma.
\begin{defn}
If \(U\) is an ultrafilter, then \(\size{U}\) denotes the minimum cardinality
of a set in \(U\).
\end{defn}
Kunen's Commuting Ultrapowers Lemma is the following fact, which we have already considerably generalized.
\begin{thm}[Kunen, {\cite[Lemma 1.1.25]{Larson}}]\label{thm:kunen_commute}
    Suppose \(U\) and \(W\) are countably complete ultrafilters
    such that \(W\) is \(\size U^+\)-complete. Then 
    \(j_W(j_U) = j_U\restriction M_W\) and \(j_U(j_W) = j_W\restriction M_U\).
    \begin{proof}
        By \cref{lma:s_lemma}, \(j_U\restriction M_W\) is the ultrapower of \(M_W\) by
        \(s_W(U)\), while \(j_W(j_U)\) is clearly the ultrapower of \(M_W\) by \(j_W(U)\).
        So to show \(j_W(j_U) = j_U\restriction M_W\), it suffices to show
        \(s_W(U) = j_W(U)\), and similarly for \(j_U(j_W) = j_W\restriction M_U\).
        But since \(W\) is \(\size U^+\)-complete,
        \(W\) is \(U\)-closed. Hence \(U\times W\) is an ultrafilter,
        which implies \(U\ltimes W = U\rtimes W\), and so by \cref{thm:nonregular_commutes},
        \(s_W(U) = j_W(U)\) and \(s_U(W) = j_U(W)\).
    \end{proof}
\end{thm}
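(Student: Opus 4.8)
\medskip
\noindent\textbf{Proof proposal.}
The plan is to push the completeness hypothesis through the combinatorics developed above to obtain \(U\ltimes W=U\rtimes W\), and then to convert that equality of ultrafilters into the two displayed equalities of embeddings by identifying each tensor product with an iterated ultrapower. For the first part, let \(X,Y\) be the underlying sets of \(U,W\). Given any \(\langle B_x:x\in X\rangle\subseteq W\), choose \(A\in U\) with \(|A|=\size U\); then \(\bigcap_{x\in A}B_x\) is an intersection of at most \(\size U\) members of \(W\), hence lies in \(W\) by \(\size U^+\)-completeness, so \(W\) is \(U\)-closed. By \cref{thm:blass_completeness}, \(U\times W\) is an ultrafilter, and so by maximality \(U\ltimes W=U\times W=U\rtimes W\). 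It then suffices to deduce \(j_W(j_U)=j_U\restriction M_W\) from \(U\ltimes W=U\rtimes W\): the companion equation \(j_U(j_W)=j_W\restriction M_U\) will follow by the same argument with the roles of \(U\) and \(W\) interchanged, since \(U\times W=W\times U\) is equally an ultrafilter, hence \(W\ltimes U=W\rtimes U\).

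For the core step I would use the presentation of the tensor products as iterated ultrapowers. Set \(N=M_{U\ltimes W}\). Unwinding the definition of \(\ltimes\) through \(j_U\) and then through the ultrapower of \(M_U\) by \(j_U(W)\) shows \(N=\mathrm{Ult}(M_U,j_U(W))=j_U(M_W)\) and \(j_{U\ltimes W}=j_U\circ j_W\); symmetrically, using \(U\rtimes W=U\ltimes W\), one gets \(N=\mathrm{Ult}(M_W,j_W(U))=j_W(M_U)\) and \(j_{U\rtimes W}=j_W\circ j_U\). From \(U\ltimes W=U\rtimes W\) I then read off (i) \(j_U\circ j_W=j_W\circ j_U\), a single embedding \(e:V\to N\); and (ii), by tracking the \((U\ltimes W)\)- and \((U\rtimes W)\)-equivalence classes of the projection \(p_2:X\times Y\to Y\) through the two identifications, \(j_U(\id_W)=j_W(j_U)(\id_W)\). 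Now \(j_W(j_U)\) and \(j_U\restriction M_W\) are both elementary embeddings from \(M_W\) into \(N\); by (i) each, composed with \(j_W\), equals \(e\), so they agree on \(j_W[V]\), and by (ii) they agree on \(\id_W\). Since every element of \(M_W\) has the form \(j_W(g)(\id_W)\), two elementary embeddings of \(M_W\) agreeing on \(j_W[V]\cup\{\id_W\}\) coincide; hence \(j_W(j_U)=j_U\restriction M_W\). (Alternatively one could invoke the preliminary ``translation'' lemma presenting \(j_U\restriction M_W\) as the ultrapower of \(M_W\) by a canonical \(M_W\)-ultrafilter \(s_W(U)\), together with the fact that \(U\ltimes W=U\rtimes W\) forces \(s_W(U)=j_W(U)\); but I find the self-contained version above more transparent.)

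The step I expect to be the main obstacle is (ii): passing from equality of the \emph{ultrafilters} \(U\ltimes W\) and \(U\rtimes W\) to equality of the \emph{embeddings}. Matching the iterated-ultrapower targets, and even checking \(j_U\circ j_W=j_W\circ j_U\), is not enough; one must also match the second generator \([p_2]\). This is already visible for \(U=W\) a normal measure on a measurable \(\kappa\), where \(W\) is \(\size W\)-complete but not \(\size W^+\)-complete: there \(j_W\circ j_W\) and \(j_W(j_W)\circ j_W\) trivially coincide, yet \(j_W(j_W)\neq j_W\restriction M_W\), their critical points being \(j_W(\kappa)\) and \(\kappa\). So the completeness hypothesis does its real work precisely in forcing the generators to line up; the remaining ingredients — the closure computation, the appeal to Blass's theorem, and the unwinding of \(\ltimes,\rtimes\) into iterated ultrapowers — are routine, if a little fiddly with the left/right conventions.
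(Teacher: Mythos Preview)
Your proof is correct and follows the same arc as the paper's: derive \(U\)-closure of \(W\) from the completeness hypothesis, pass through Blass's theorem to \(U\ltimes W=U\rtimes W\), and then convert that equality of ultrafilters into the equality of embeddings. The only packaging difference is that the paper takes precisely your parenthetical ``alternative'' route---it cites \cref{lma:s_lemma} to identify \(j_U\restriction M_W\) with the \(s_W(U)\)-ultrapower and \cref{lma:nonregular_s} (via \cref{thm:nonregular_commutes}) to get \(s_W(U)=j_W(U)\)---whereas your main argument unpacks those lemmas inline by tracking \([\pi_1]\) through the two iterated-ultrapower presentations; your observation about the \(U=W\) example correctly pinpoints why matching the second generator is the real content.
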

\subsection{Indecomposable ultrafilters}    
In this subsection, we prove the following theorems:
\begin{thm}\label{thm:indecomposable_product}
    Suppose \(U\) and \(W\) are ultrafilters such that for all cardinals \(\lambda\),
    either \(U\) or \(W\) is \((\lambda,2^\lambda)\)-indecomposable. 
    Then \(U\times W\) is an ultrafilter.
\end{thm}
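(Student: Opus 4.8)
The plan is to show that $W$ is $U$-closed (equivalently, $U\times W$ is an ultrafilter by \cref{thm:blass_completeness} and \cref{lma:filter_closure}) by exploiting the hypothesis to control the "branching" of an arbitrary $X$-indexed sequence from $W$. Fix a partition of the underlying set $Y$ of $W$ into pieces according to the values of such a sequence; the key point is that the indecomposability hypothesis, applied at the right cardinal $\lambda$, forces the relevant sequence to be supported, modulo $W$, on fewer than $\lambda$ coordinates, which is exactly what is needed to find a single set $A\in U$ with $\bigcap_{x\in A}B_x\in W$.

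More concretely, let $\langle B_x : x\in X\rangle\subseteq W$ witness a potential failure of $U$-closure, where $U$ is on $X$. For each $y\in Y$ let $\sigma(y)=\{x\in X : y\in B_x\}$; then $\sigma : Y\to P(X)$, and $U$-closure of $W$ is equivalent to the assertion that some $A\in U$ satisfies $\{y : A\subseteq\sigma(y)\}\in W$. Push $W$ forward by $\sigma$ to obtain an ultrafilter $Z=\sigma_*(W)$ on $P(X)$, and note that $\bigcup\{t : t\in P(X), A\subseteq t\}$ running over $A\in U$ is not quite a cover, so instead I would work with the complementary coding: for each $x\in X$, $C_x=\{t\in P(X) : x\in t\}\in Z$ (since $B_x\in W$), and $U$-closure amounts to finding $A\in U$ with $\bigcap_{x\in A}C_x\in Z$, i.e.\ $Z$ is $U$-closed. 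So it suffices to show that $Z$ is $U$-closed, and now $Z$ concentrates on $P(X)$. Setting $\lambda=|X|$, one either uses that $U$ is $(\lambda,2^\lambda)$-indecomposable or that $W$ (hence $Z$, being a pushforward — here I must check pushforwards preserve the relevant indecomposability, which they do since $\bigcup\mathcal P\in Z$ pulls back to a cover in $W$) is $(\lambda,2^\lambda)$-indecomposable; in the first case partition $P(X)=\bigcup_{x\in X}\{t : x\notin t\}\cup\{X\}$, which is a cover by $2^\lambda$ sets none of which (if $X\notin U$-trivially) lies in... — rather, apply indecomposability to the cover $\langle \{t\in P(X): x\notin t\} : x\in X\rangle$ of $P(X)\setminus\{X\}$ together with $\{X\}$: if $X\in Z$ then take $A=X$ and we are done, and otherwise indecomposability of $U$ at $(\lambda,2^\lambda)$ gives $A_0\in[X]^{<\lambda}$ with $\bigcup_{x\in A_0}\{t : x\notin t\}\in Z$, i.e.\ $\{t : t\not\supseteq X\setminus A_0\}\in Z$, which is the wrong direction — so instead one applies indecomposability to recover that $Z$ concentrates on a small family of subsets of $X$ and then closes off.

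Let me restate the heart of the argument, which is the clean version: I claim $Z$ is $U$-closed provided $U$ is $(\lambda,2^\lambda)$-indecomposable with $\lambda=|X|$, and symmetrically provided $Z$ is $(\lambda,\lambda^+)$-or-$(\lambda^+,2^\lambda)$-indecomposable — but the hypothesis as stated is that one of $U,W$ is $(\lambda,2^\lambda)$-indecomposable for \emph{every} $\lambda$, so I would run a case analysis on which alternative holds at $\lambda=|X|$. If it is $U$ that is $(\lambda,2^\lambda)$-indecomposable: the sets $\langle C_x : x\in X\rangle$ are all in $Z$; their complements $D_x=\{t : x\notin t\}$ cover $P(X)\setminus\{X\}$; since $|P(X)|=2^\lambda$, indecomposability of $U$ does not directly apply to a cover indexed by $Z$, so instead I use the dual formulation via \cref{prp:regular_fine}: $(\lambda,2^\lambda)$-indecomposability of $U$ says exactly that $F_\lambda(2^\lambda)$ is not $U$... no — I would rather invoke the characterization that $U$ is $(\lambda,\eta)$-indecomposable iff $U$ is not $(\lambda,\eta)$-regular's obstruction, and combine with \cref{lma:regular_katetov}. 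The cleanest route: $(\lambda,2^\lambda)$-indecomposability of $U$ is equivalent to: $U$ is \emph{not} $(\lambda,2^\lambda)$-regular. By \cref{prp:regular_fine} and \cref{lma:regular_katetov}, if $U$ were $(\lambda, 2^{\lambda})$-regular then $\mathbb{F}(F_\lambda(2^\lambda))\kat U$, but any ultrafilter $Z$ on $P(X)$ with $|X|=\lambda$ has $Z\kat$ witnessed by... so $U$-closure of $Z$ reduces to showing $Z\not\kat$-below some regular object, using that $U$ avoids all such. I would then finish by the standard argument that a $(\lambda,\lambda^+)$-indecomposable (and $(\lambda^+, 2^\lambda)$-indecomposable, etc.\ across the interval) ultrafilter is closed under sequences from an ultrafilter on a set of size $\lambda$ — iterating indecomposability up the cardinals $\lambda\le\mu\le 2^\lambda$ to collapse the support to size $<\lambda$.

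The main obstacle I anticipate is precisely this bookkeeping: "$(\lambda,2^\lambda)$-indecomposable" controls covers of size exactly $2^\lambda$ down to size $<\lambda$ in one step, but the natural cover arising from $\langle B_x : x\in X\rangle$ has index set $X$ of size $\lambda$ while the ambient ultrafilter $Z$ lives on $P(X)$; reconciling these — showing that the one-step jump from $2^\lambda$ to $<\lambda$ is exactly strong enough, and that it is legitimately inherited under the pushforward $\sigma_*$ — is where the real work lies, and I expect to need the symmetry of regularity (\cref{thm:nonregular_commutes}) together with \cref{lma:regular_katetov} to package it, rather than a bare-hands partition argument.
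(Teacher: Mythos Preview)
Your reduction is the right first move, and it is in fact the heart of the paper's argument: pushing one of the ultrafilters forward into \(P(X)\) so that the failure of closure is witnessed on a set of size \(2^{|X|}\). Where your attempt stalls is in extracting the conclusion from this reduction, and two of the routes you try contain genuine errors.

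First, the assertion that ``\((\lambda,2^\lambda)\)-indecomposability of \(U\) is equivalent to: \(U\) is not \((\lambda,2^\lambda)\)-regular'' is false. For a regular cardinal \(\delta\), \((\delta,\delta)\)-regularity coincides with \(\delta\)-decomposability, but for a pair \((\lambda,\eta)\) with \(\lambda<\eta\) the notions diverge: \((\lambda,\eta)\)-regularity is in general strictly stronger than being \(\gamma\)-decomposable for some (or even every) \(\gamma\) with \(\lambda\le\gamma\le\eta\). So the route through \cref{prp:regular_fine} and \cref{lma:regular_katetov} does not deliver what you need.

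Second, and more fundamentally, the argument cannot close off at the single cardinal \(\lambda=|X|\). After your pushforward you have \(Z\) on \(P(X)\) versus \(U\) on \(X\); applying the hypothesis at \(\lambda\) (whichever disjunct holds) shrinks the underlying set of one side to size \(<\lambda\), but you are then in the same configuration at a smaller cardinal and must invoke the hypothesis again. Since the hypothesis is a disjunction whose witness may flip between \(U\) and \(W\) as the cardinal changes, a naive iteration does not obviously terminate, and your sketch does not address this.

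The paper resolves this by working contrapositively with a minimality device that absorbs the iteration. Assume \(U\times W\) is not an ultrafilter; choose \(\tilde U\RK U\) on a set \(X\) of \emph{minimal} cardinality with \(\tilde U\times W\) not an ultrafilter, then \(\tilde W\RK W\) on a set \(Y\) of minimal cardinality with \(\tilde U\times\tilde W\) not an ultrafilter. Your pushforward observation, applied once in each direction, now yields \(|Y|\le 2^{|X|}\) and \(|X|\le 2^{|Y|}\) directly from minimality (the pushed-forward ultrafilter is still \(\RK\) below the original and still fails closure, so it cannot live on a smaller set). With \(\lambda=\min(|X|,|Y|)\), both \(\tilde U\) and \(\tilde W\) are uniform on sets whose cardinalities lie in \([\lambda,2^\lambda]\), so neither \(U\) nor \(W\) is \((\lambda,2^\lambda)\)-indecomposable.
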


\begin{thm}\label{thm:nonregular_indecomposable}
    Suppose \(U\) and \(W\) are ultrafilters such that \(U\ltimes W = U\rtimes W\).
    Then for all cardinals \(\lambda\),
    either \(U\) or \(W\) is \((\lambda,\lambda^+)\)-indecomposable.
\end{thm}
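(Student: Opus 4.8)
The plan is to argue contrapositively. By \cref{thm:nonregular_commutes}, $U\ltimes W=U\rtimes W$ is equivalent to $U$ being $W$-nonregular, so it suffices to show: if some cardinal $\lambda$ witnesses that neither $U$ nor $W$ is $(\lambda,\lambda^+)$-indecomposable, then $U$ is $W$-regular. The first step is to unwind the hypothesis. Disjointifying a cover of size at most $\lambda^+$ that admits no subcover of size $<\lambda$, and then restricting to the support of the resulting pushforward, shows that an ultrafilter is \emph{not} $(\lambda,\lambda^+)$-indecomposable exactly when it is $\lambda$-decomposable or $\lambda^+$-decomposable --- that is, when some pushforward of it is a uniform ultrafilter on $\lambda$ or on $\lambda^+$. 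The second step is a formal reduction: one checks $\mathbb F(U)\kat\mathbb F(f_*U)$ by way of $\sigma\mapsto f^{-1}[\sigma]$, so by \cref{prp:regular_fine} and \cref{lma:regular_katetov} it is enough to prove that $f_*U$ is $g_*W$-regular for appropriate pushforwards. Hence one may assume outright that $U$ and $W$ are uniform ultrafilters, $U$ on a cardinal $\kappa_U\in\{\lambda,\lambda^+\}$ and $W$ on $\kappa_W\in\{\lambda,\lambda^+\}$.

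Suppose first $\kappa_U=\kappa_W=:\kappa$; this also covers every situation in which $U$ and $W$ happen to share a decomposition cardinal. Fix $f\colon X\to\kappa$ and $g\colon Y\to\kappa$ with uniform pushforwards and consider the ``linear order'' $R=\{(x,y):f(x)\ge g(y)\}$ together with its complement $S=(X\times Y)\setminus R$. Because a uniform ultrafilter on $\kappa$ contains every co-bounded subset of $\kappa$, each section $(R)_x=g^{-1}[f(x)+1]$ is bounded and so $W$-small, while each section $(R)^y=f^{-1}\bigl[[g(y),\kappa)\bigr]$ is co-bounded and so in $U$; the situation for $S$ is dual. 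Thus $R\in(U\rtimes W)\setminus(U\ltimes W)$ and $S\in(U\ltimes W)\setminus(U\rtimes W)$, so $U\ltimes W$ and $U\rtimes W$ are incompatible and $U$ is $W$-regular by \cref{lma:regular_incompatible}.

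The remaining case --- $\kappa_U\neq\kappa_W$, say $U$ uniform on $\lambda$ and $W$ uniform on $\lambda^+$, and (appealing to the previous case) $U$ and $W$ with no common decomposition cardinal, so that $W$ is $\lambda$-indecomposable --- is where the real work lies and where I expect the main obstacle. The naive comparison is useless here, since $f(x)<g(y)$ for $W$-almost every $y$; instead one must produce a sequence $\langle A_\eta:\eta<\lambda^+\rangle\subseteq U$ with every set $\{\eta:\alpha\in A_\eta\}$ $W$-small, i.e.\ a ``regularizing matrix'' living between a uniform ultrafilter on $\lambda^+$ and one on $\lambda$. The strategy I would pursue is to build such a matrix from an Ulam matrix on $\lambda^+$ --- whose columns are pairwise disjoint and so meet $W$ in at most one cell, while whose rows partition co-bounded, hence $W$-large, sets --- using the $\lambda$-indecomposability of $W$ to select, row by row, a $W$-large union of fewer than $\lambda$ cells, and finally threading the resulting $[\lambda]^{<\lambda}$-valued function through the ultrafilter $U$ on $\lambda$. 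The crux, which I would isolate as a separate lemma, is that this can always be carried out in ZFC --- equivalently, that a uniform ultrafilter on $\lambda^+$ is necessarily regular over any uniform ultrafilter on $\lambda$, so that in the mixed configuration $U$ is automatically $W$-regular.
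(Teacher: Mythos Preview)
Your equal-cardinal case is correct and is essentially the elementary observation behind \cref{cor:filter_indecomp_nonreg}: if both ultrafilters are \(\kappa\)-decomposable, the diagonal relation witnesses incompatibility of \(U\ltimes W\) and \(U\rtimes W\). The reduction to pushforwards is also fine.

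The genuine gap is the mixed case. You have correctly isolated the crux --- that a uniform ultrafilter on \(\lambda^+\) must be regular over any uniform ultrafilter on \(\lambda\) --- but you have not proved it, and your Ulam-matrix sketch does not get there. Producing, for each \(\xi<\lambda^+\), a set \(S_\xi\in[\lambda]^{<\lambda}\) with \(\bigcup_{\alpha\in S_\xi}A_{\alpha,\xi}\in W\) is fine, but ``threading through \(U\)'' is where the argument stalls: you need a \emph{single} \(U\)-large set that works uniformly in \(\xi\), and nothing in your setup controls how the \(S_\xi\) vary. The difficulty is real and is exactly the content of \cref{lma:prikry} (a result of Prikry, generalized by Lipparini): a \(\lambda^+\)-decomposable ultrafilter is either \(\cf(\lambda)\)-decomposable or \((\kappa,\lambda^+)\)-regular for some \(\kappa<\lambda\). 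Either alternative yields \((\lambda,\lambda)\)-regularity of \(W\), and since \(F_\lambda(\lambda)\kat U\), \cref{lma:regular_katetov} then gives \(W\) \(U\)-regular, hence \(U\) \(W\)-regular by symmetry. The paper packages this as \cref{lma:nonregular_indecomposable}: \((\lambda,\lambda)\)-nonregularity already forces \((\lambda,\lambda^+)\)-indecomposability.

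With \cref{lma:prikry} in hand the paper's argument is also shorter than yours: there is no need to split on \(\kappa_U,\kappa_W\). One simply observes that if \(W\) is \(\lambda\)-decomposable then \(U\) is \((\lambda,\lambda)\)-nonregular (your diagonal argument, i.e.\ \cref{cor:filter_indecomp_nonreg}), and then \cref{lma:nonregular_indecomposable} upgrades this to \((\lambda,\lambda^+)\)-indecomposability of \(U\); the case where \(W\) is only \(\lambda^+\)-decomposable follows by symmetry in \(U\) and \(W\). Your Ulam-matrix intuition is not wrong --- such matrices do appear in proofs of \cref{lma:prikry} --- but you should cite the lemma rather than attempt to rediscover it inside this proof.
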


As an immediate corollary of \cref{thm:indecomposable_product} and
\cref{thm:nonregular_indecomposable},
we obtain our main theorem:
\begin{thm}[GCH]
    For any ultrafilters \(U\) and \(W\), \(U\ltimes W = U\rtimes W\)
    if and only if \(U\times W\) is an ultrafilter.\qed
\end{thm}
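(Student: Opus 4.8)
The plan is to obtain the theorem by chaining together the two results of this subsection, with GCH used only to reconcile their cardinal parameters; essentially all of the work has already been packaged into \cref{thm:indecomposable_product} and \cref{thm:nonregular_indecomposable}.

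First I would dispose of the implication from ``\(U\times W\) is an ultrafilter'' to ``\(U\ltimes W = U\rtimes W\)'', which requires no hypothesis and merely repeats the observation from the introduction: one always has \(U\times W\subseteq U\ltimes W\) and \(U\times W\subseteq U\rtimes W\), and both \(U\ltimes W\) and \(U\rtimes W\) are proper filters, so if \(U\times W\) is an ultrafilter then its maximality forces both inclusions to be equalities, whence \(U\ltimes W = U\times W = U\rtimes W\).

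For the converse I would argue as follows. Suppose \(U\ltimes W = U\rtimes W\). By \cref{thm:nonregular_indecomposable}, for every infinite cardinal \(\lambda\) at least one of \(U\), \(W\) is \((\lambda,\lambda^+)\)-indecomposable. Now invoke GCH in the form \(2^\lambda = \lambda^+\): the previous sentence then asserts that for every \(\lambda\) one of \(U\), \(W\) is \((\lambda, 2^\lambda)\)-indecomposable, which is precisely the hypothesis of \cref{thm:indecomposable_product}. Applying that theorem yields that \(U\times W\) is an ultrafilter, which completes the equivalence.

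I do not expect any genuine obstacle at this stage, since the difficulty is contained entirely in the two cited theorems; but it is worth saying where GCH is doing work. \cref{thm:nonregular_indecomposable} extracts from the commutativity \(U\ltimes W = U\rtimes W\) only enough indecomposability to break up partitions into \(\lambda^+\) pieces, whereas \cref{thm:indecomposable_product}, in order to show that every subset of \(X\times Y\) or its complement lies in \(U\times W\), must cope with the up to \(2^\lambda\) many possible ``columns'' over a set of size \(\lambda\); GCH is exactly what bridges \(\lambda^+\) and \(2^\lambda\). One harmless subtlety: in both theorems the quantifier reads ``for all \(\lambda\), \(U\) or \(W\) is \(\dots\),'' so the ultrafilter that is indecomposable is allowed to vary with \(\lambda\), and since the two statements carry the same floating quantifier no uniformization is required.
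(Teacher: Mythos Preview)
Your proposal is correct and matches the paper's approach exactly: the paper states this theorem as an immediate corollary of \cref{thm:indecomposable_product} and \cref{thm:nonregular_indecomposable}, with GCH used precisely to convert \((\lambda,\lambda^+)\)-indecomposability into \((\lambda,2^\lambda)\)-indecomposability. Your discussion of the trivial direction and of where GCH does its work is accurate.
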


Recall the notion of indecomposability introduced above (\cref{defn:indecomposability}).
\begin{defn}
    If \(\lambda\) is a cardinal, \(F\) is \textit{\(\lambda\)-indecomposable} 
    if \(F\) is \((\lambda,\lambda)\)-indecomposable.
\end{defn}
It is easy to see that \(U\) is \((\lambda,\eta)\)-indecomposable (see \cref{defn:indecomposability})
if and only if
\(U\) is \(\gamma\)-indecomposable whenever \(\lambda\leq \gamma\leq \eta\).
We will use the term \textit{\(\lambda\)-decomposable} in the obvious way. (We avoid the
notion of \((\lambda,\eta)\)-decomposability in general, however, 
to avoid potential ambiguity.) We will use the following straightforward lemma:
\begin{lma}\label{lma:decomp_kat}
    A filter \(F\) is \(\lambda\)-decomposable
    if and only if \(F_\lambda(\lambda)\kat F\) where 
    \(F_\lambda(\lambda)\) is the Fr\'echet filter on \(\lambda\).\qed
\end{lma}
We start with the proof of \cref{thm:indecomposable_product}.
\begin{proof}[Proof of \cref{thm:indecomposable_product}]
Suppose \(U\times W\) is not an ultrafilter. We will show
that there is some cardinal \(\lambda\) such that
neither \(U\) nor \(W\) is \((\lambda,2^\lambda)\)-decomposable.
Let \(X\) be a set of minimal cardinality
carrying an ultrafilter \(\tilde{U}\RK U\) such that
\(\tilde{U}\times W\) is not an ultrafilter.
Let \(Y\) be a set of minimal cardinality
carrying an ultrafilter \(\tilde{W}\RK W\) such that
\(\tilde{U}\times \tilde{W}\) is not an ultrafilter.

Let \(\langle A_y : y \in Y\rangle\subseteq \tilde{U}\) witness
that \(\tilde{U}\) is not \(\tilde{W}\)-closed. 
Let \(f : Y \to P(X)\) be the function
\(f(y) = A_y\). Then \(\tilde{U}\) is not \(f_*(\tilde{W})\)-closed:
indeed, setting \(A_{B} = B\) if \(B\in \ran(f)\) and
\(A_B = X\) otherwise, the sequence
\(\langle A_B : B\in P(X)\rangle\) witnesses that \(\tilde{U}\)
is not \(f_*(\tilde{W})\)-closed. The minimality of the cardinality of
\(Y\) therefore implies \(|Y| \leq 2^{|X|}\).

Similarly, \(|X|\leq 2^{|Y|}\). Let \(\lambda = \min\{|X|,|Y|\}\).
Then \(\tilde U\) and \(\tilde W\) are both uniform ultrafilters
on sets whose cardinalities lie in the interval \([\lambda,2^\lambda]\).
Since \(\tilde U\RK U\) and \(\tilde W\RK W\),
this implies (by \cref{lma:decomp_kat})
that neither \(U\) nor \(W\) is \((\lambda,2^\lambda)\)-indecomposable. 
\end{proof}

We now turn to \cref{thm:nonregular_indecomposable},
which follows from a simple combinatorial
fact about the relationship between regularity and
decomposability.
\begin{cor}\label{cor:filter_indecomp_nonreg}
    Suppose \(F\) and \(G\) are filters such that \(F\) is \(G\)-nonregular
    and \(\lambda\) is a cardinal such that \(G\) is \(\lambda\)-decomposable.
    Then \(F\) is \((\lambda,\lambda)\)-nonregular.
    \begin{proof}
        Assume \(F\) is \((\lambda,\lambda)\)-regular, towards a contradiction.
        In other words, \(F\) is \(F_\lambda(\lambda)\)-regular
        where \(F_{\lambda}(\lambda)\) denotes the Fr\'echet filter on \(\lambda\).
        But \(G\) is \(\lambda\)-decomposable if and only if 
        \(F_\lambda(\lambda)\kat G\). So by \cref{lma:regular_katetov}, 
        \(F\) is \(G\)-regular, contrary to hypothesis.
    \end{proof}
\end{cor}

\cref{thm:nonregular_indecomposable} will be a consequence of \cref{cor:filter_indecomp_nonreg}
using the following theorem, considerably generalized by Lipparini \cite[Theorem 2.2]{LippariniRegular}:
\begin{lma}\label{lma:prikry}
    If \(U\) is \(\lambda^+\)-decomposable, either \(U\) is \(\cf(\lambda)\)-decomposable
    or \(U\) is \((\kappa,\lambda^+)\)-regular for some \(\kappa < \lambda\).
    In particular, if \(\lambda\) is regular, then \(U\) is \(\lambda\)-decomposable.\qed
\end{lma}

\begin{lma}\label{lma:nonregular_indecomposable}
    Suppose \(U\) is \((\lambda,\lambda)\)-nonregular. Then \(U\)
    is \((\lambda,\lambda^+)\)-indecomposable.
    \begin{proof}
        Clearly \(U\) is \(\cf(\lambda)\)-indecomposable (and
        in particular, \(U\) is \(\lambda\)-indecomposable).
        Therefore by \cref{lma:prikry}, if \(U\) were \(\lambda^+\)-decomposable,
        then \(U\) would \((\gamma,\lambda^+)\)-regular for some \(\gamma < \lambda\),
        but this of course would imply that \(U\) is \((\lambda,\lambda)\)-regular,
        contrary to assumption. Therefore \(U\) is \(\lambda^+\)-indecomposable.
        It follows that \(U\) is 
        \(\lambda\)-indecomposable and \(\lambda^+\)-indecomposable,
        so \(U\) is \((\lambda,\lambda^+)\)-indecomposable.
    \end{proof}
\end{lma}
\cref{thm:nonregular_indecomposable} is now immediate.
\begin{proof}[Proof of \cref{thm:nonregular_indecomposable}]
    Assume that \(W\) is not 
    \((\lambda,\lambda^+)\)-indecomposable, and we will show that \(U\) is.
    Since \(U\ltimes W= U\rtimes W\),
    \(U\) is \(W\)-nonregular. Therefore by \cref{cor:filter_indecomp_nonreg}, 
    if \(W\) is \(\lambda\)-decomposable,
    then \(U\) is \((\lambda,\lambda)\)-nonregular.
    It follows that \(U\) is \((\lambda,\lambda^+)\)-indecomposable
    by \cref{lma:nonregular_indecomposable}.
\end{proof}
\section{The internal relation}
\subsection{Amalgamations}
Our main questions concern the relationship between \(U\times W\)
and its extensions \(U\ltimes W\) and \(U\rtimes W\). More generally,
one can consider arbitrary ultrafilters extending of \(U\times W\),
which we will call \textit{amalgamations} of \((U,W)\).
An amalgamation \(D\) of \((U,W)\) is an upper bound of \(U\) and \(W\)
in the Rudin-Keisler order, since \(U = (\pi_0)_*(D)\) and \(W= (\pi_1)_*(D)\)
where \(\pi_n\) denotes projection to the \(n\)-th coordinate.

There is a second way to view amalgamations: as iterated ultrapowers.
Suppose \(U\) is an ultrafilter on \(X\) and \(W\) is an ultrafilter on \(Y\).
If \(D\) is an amalgamation of \((U,W)\), then
there is a unique elementary embedding \(k : M_U\to M_D\) such that
\(k\circ j_U = j_D\) and \(k(\id_U) = [\pi_0]_D\). 
This embedding is isomorphic to the ultrapower of \(M_U\) by an \(M_U\)-ultrafilter, 
defined in this context as follows: 
\begin{defn}
    Suppose \(U\) and \(W\) are ultrafilters
    on sets \(X\) and \(Y\) and \(D\) is an amalgamation of \((U,W)\).
    Then \((D)_0\) denotes the \(M_U\)-ultrafilter on 
    \(j_U(Y)\) derived from \(k\) using \([\pi_1]_D\)
    where \(k : M_U\to M_D\) is the canonical factor embedding.
\end{defn}
The ultrafilter \((D)_1\) is defined similarly, so
\((D)_1 = (\breve{D})_0\) where \(\breve{D}\) denotes the amalgamation
of \((W,U)\) given by \(D\).
One can compute:
\begin{lma}\label{lma:0_formula}
    If \(D\) is an amalgamation of \((U,W)\), then
    \[(D)_0 = \{[f_R]_U : R\in D\} \]
    where \(f_R :X\to P(Y)\) is given by 
    \(f_R(x) = (R)_x = \{y\in Y : (x,y)\in R\}\).\qed
\end{lma}
To prove that the canonical factor embedding \(k: M_U\to M_D\) is isomorphic to the
ultrapower embedding of \(M_U\) associated to \((D)_0\), 
one must show that the canonical factor embedding \(h : M_{(D)_0}^{M_U}\to M_D\)
is an isomorphism. It is enough to show \(h\) is surjective,
and this holds because every element of \(M_D\) is definable from parameters among \(j_D[V]\),
\([\pi_0]_D\), and \([\pi_1]_D\), and all these parameters lie in the range of \(h\):
\(j_D[V] = h\circ j_{(D)_0}^{M_U}\circ j_U[V]\),
\([\pi_0]_D = h(j_{(D)_0}^{M_U}(\id_U))\), and
\([\pi_1]_D = h(\id_{(D)_0}^{M_U})\).
\subsection{Amalgamations and the internal relation}
The internal relation is closely related to the ultrafilters
\(U\ltimes W\) and \(U\rtimes W\), viewed as
amalgamations of \((U,W)\).
This subsection is devoted to this relationship.
\begin{lma}\label{lma:UW_amalgamation}
    If \(U\) and \(W\) are ultrafilters, then \((U\ltimes W)_0 = j_U(W)\).\qed
\end{lma}
The ultrafilter \((U\ltimes W)_1\) is more interesting.
\begin{defn}
    If \(U\) and \(W\) are ultrafilters, then \(s_W(U) = (U\ltimes W)_1\).
\end{defn}
The following lemma shows that \(s_W(U)\) is in a sense
the canonical \(M_W\)-ultrafilter giving rise to the ultrapower
embedding \(j_U\restriction M_W\).
\begin{lma}\label{lma:s_lemma}
    Suppose \(U\) and \(W\) are ultrafilters on sets \(X\) and \(Y\).
    Then \(s_W(U)\) is the \(M_W\)-ultrafilter on \(j_W(X)\) derived from 
    \(j_U\restriction M_W : M_W\to j_U(M_W)\) using \([j_W]_U\),
    and the canonical factor embedding from \(M_{s_W(U)}^{M_W}\) to \(j_U(M_W)\) is an isomorphism.
    Finally, \(s_W(U)\) is given by the formula 
    \[s_W(U) = \{A\in j_W(P(X)) : j_W^{-1}[A]\in U\}\]
    \begin{proof}
        By definition, \(s_W(U)\) is the \(M_W\)-ultrafilter on \(j_W(X)\)
        derived from the canonical factor embedding \(i : M_W\to M_{U\ltimes W}\)
        using \([\pi_0]_{U\ltimes W}\). In general, the factor embedding
        \(p : M_{s_W(U)}^{M_W}\to M_{U\ltimes W}\) is an isomorphism;
        see the remarks following \cref{lma:0_formula}.

        We first show \(s_W(U)\) is the \(M_W\)-ultrafilter on \(j_W(X)\) derived from 
        \(j_U\restriction M_W\) using \([j_W]_U\).
        Let \(k : M_U\to M_{U\ltimes W}\) denote the canonical factor embedding.
        By \cref{lma:UW_amalgamation}, there is an isomorphism
        \(h : M_{U\ltimes W}\to M_{j_U(W)}^{M_U}\) such that
        \(h\circ k\) is equal to the ultrapower
        embedding of \(M_U\) associated to \(j_U(W)\) and 
        \(h([\pi_1]_{U\ltimes W}) = \id_{j_U(W)}\).

        We claim that \[h\circ i = j_U\restriction M_W\]
        Note that by elementarity \(j_U(M_W) = M_{j_U(W)}^{M_U}\),
        so \(h\circ i\) and \(j_U\restriction M_W\) at least have the same codomain.
        It suffices to show
        that \(h\circ i\) and \(j_U\) agree on the parameters
        \(j_W[V]\cup \{\id_W\}\), from which all 
        other elements of \(M_W\) are definable in \(M_W\).
        First, \[h\circ i \circ j_W = h\circ j_{U\ltimes W} = j_{j_U(W)}\circ j_U = j_U\circ j_W\]
        and this implies \(h\circ i\restriction j_W[V] = j_U\restriction j_W[V]\).
        Second, 
        \[h(i(\id_W)) = h([\pi_1]_{U\ltimes W}) = \id_{j_U(W)} = j_U(\id_W)\]
        This proves \(h\circ i = j_U\restriction M_W\).

        Now since \(s_W(U)\) is the \(M_W\)-ultrafilter on \(j_W(X)\)
        derived from the canonical factor embedding \(i : M_W\to M_{U\ltimes W}\)
        using \([\pi_0]_{U\ltimes W}\),
        \(s_W(U)\) is the \(M_W\)-ultrafilter on \(j_W(X)\) derived from \(h\circ i\)
        using \(h([\pi_0]_{U\ltimes W})\). But \(h\circ i = j_U\restriction M_W\) and
        \[h([\pi_0]_{U\ltimes W}) = h(k(\id_U)) = j_U(j_W)(\id_U) = [j_W]_U\]
        This proves \(s_W(U)\) is the \(M_W\)-ultrafilter on \(j_W(X)\) derived from 
        \(j_U\restriction M_W\) using \([j_W]_U\).
        
        The canonical factor embedding \(p : M_{s_W(U)}^{M_W}\to M_{U\ltimes W}\) is an isomorphism,
        and the embedding \(h : M_{U\ltimes W}\to j_U(M_W)\) is an isomorphism,
        so \(h\circ p: M_{s_W(U)}^{M_W}\to j_U(M_W)\) is an isomorphism, 
        and by uniqueness, \(h\circ p\)
        is the canonical factor embedding from \(M_{s_W(U)}^{M_W}\) to \(j_U(M_W)\).
        This proves that the canonical factor embedding from 
        \(M_{s_W(U)}^{M_W}\) to \(j_U(M_W)\) is an isomorphism.

        The pushforward \((j_W)_*(U) = \{A\subseteq j_W(X) : j_W^{-1}[A]\in U\}\)
        is as usual equal to the ultrafilter derived from \(j_U\) using \([j_W]_U\).
        Therefore \(s_W(U)\), being the ultrafilter derived from \(j_U\restriction M_W\) using
        \([j_W]_U\), is equal to \((j_W)_*(U)\cap M_W\).
        In other words, \[s_W(U) = \{A\in j_W(P(X)) : j_W^{-1}[A]\in U\}\qedhere\]
    \end{proof}
\end{lma}

From \cref{lma:s_lemma}, we immediately obtain:
\begin{lma}\label{defn:internal_relation2}
    For any ultrafilters \(U\) and \(W\), \(U\I W\) if and only if \(s_W(U)\in M_W\).\qed
\end{lma}
Given this equivalence, the internal relation 
looks quite a bit like the Mitchell order, 
the difference being that instead of
demanding that the \(V\)-ultrafilter \(U\) belong to \(M_W\),
one demands that the corresponding \(M_W\)-ultrafilter \(s_W(U)\) does.

The basic relationship between the internal relation and products
of ultrafilters is the following:
\begin{lma}\label{lma:nonregular_s}
    If \(U\) and \(W\) are ultrafilters, then \(U\) is \(W\)-nonregular
    if and only if \(s_W(U) = j_W(U)\).
    \begin{proof}
        Note that an amalgamation \(D\) of \((U,W)\) is uniquely determined
        by \(U\) and \((D)_0\) (or \(W\) and \((D)_1\)): by \cref{lma:0_formula},
        \[D = \{R\subseteq X\times Y : [f_R]_U\in (Z)_0\}\]
        Now \((U\rtimes W)_1 = (W\ltimes U)_0 = j_W(U)\) by \cref{lma:UW_amalgamation}
        and \((U\ltimes W)_1 = s_W(U)\) by definition, so
        \(s_W(U) = j_W(U)\) if and only if \(U\ltimes W = U\rtimes W\).
    \end{proof}
\end{lma}
\subsection{The internal relation and the Ketonen order}
The following theorem is almost proved in \cite{IR}, but here we are interested in
ultrafilters that may not be countably complete, so a new
argument is needed.
\begin{thm}\label{thm:cointernal_complete}
    Suppose \(U\) and \(W\) are ultrafilters such that \(U\I W\) and \(W\I U\).
    Then for any cardinal \(\delta\), 
    either \(U\) or \(W\) is \(\delta\)-indecomposable.
\end{thm}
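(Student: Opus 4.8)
I would argue by contraposition: fix a cardinal \(\delta\) such that \emph{both} \(U\) and \(W\) are \(\delta\)-decomposable, and derive a contradiction from \(U\I W\) and \(W\I U\). (I focus on the main case that \(\delta\) is regular; the singular case should need only a small additional argument.) The first move is a reduction. By \cref{lma:decomp_kat} there are pushforwards \(U_0\RK U\) and \(W_0\RK W\) that are ultrafilters on \(\delta\) extending the Fr\'echet filter \(F_\delta(\delta)\). I would then invoke the functoriality of the internal relation under pushforwards in each coordinate: if \(U'\RK U\) and \(W'\RK W\), then \(U\I W\) implies \(U'\I W'\). In the first coordinate this is a direct computation from the formula \(s_W(U)=\{A\in j_W(P(X)):j_W^{-1}[A]\in U\}\) of \cref{lma:s_lemma}, since \(s_W\) commutes with pushforwards and \(s_W(\cdot)\in M_W\) is preserved by pushing forward along a map lying in \(M_W\); the second coordinate follows by factoring \(j_W\) through \(j_{W'}\) and tracing canonical factor embeddings. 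So it suffices to treat the case that \(U\) and \(W\) are (non-principal) ultrafilters on \(\delta\) extending \(F_\delta(\delta)\), with \(U\I W\) and \(W\I U\).

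The heart of the matter, and the point where the rest of this subsection is needed, is the dictionary between the internal relation and the Ketonen order. For ultrafilters on \(\delta\) extending \(F_\delta(\delta)\), the statement \(U\I W\)---which by \cref{defn:internal_relation2} says \(s_W(U)\in M_W\)---exhibits \(j_U\restriction M_W=j^{M_W}_{s_W(U)}\) as an \emph{internal} ultrapower of \(M_W\); unwinding which ordinals \(s_W(U)\) concentrates on (namely those in the interval \([\sup j_W[\delta],j_W(\delta))\)) shows that this is exactly the assertion that \(U\) precedes \(W\) in the Ketonen order. For countably complete ultrafilters this is essentially the argument of \cite{IR}; the new content is to run it without assuming countable completeness.

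Given the dictionary, \(U\I W\) and \(W\I U\) make \(U\) and \(W\) mutually Ketonen-below one another. Since the Ketonen order is a strict partial order invariant under Rudin--Keisler isomorphism (note that on arbitrary ultrafilters on \(\delta\) it need not be well-founded, so one cannot simply appeal to well-foundedness here), this forces \(U\) and \(W\) to be isomorphic: \(W=\sigma_*(U)\) for a bijection \(\sigma\) of \(\delta\). Transporting \(U\I\sigma_*(U)\) along \(\sigma\) gives \(U\I U\), contradicting the fact that a non-principal ultrafilter is never internal to itself (equivalently, that \(j_U\restriction M_U\) is not close to \(M_U\); this should be provable directly in the required generality). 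I expect the two genuinely nontrivial steps to be (i) the internal-relation/Ketonen dictionary in the absence of countable completeness, and (ii) the non-self-internality of non-principal ultrafilters; the reduction in the first paragraph is routine bookkeeping. It is worth stressing that the decomposability hypothesis is used essentially in that reduction: a cointernal pair \(U\I W\), \(W\I U\) need not be Ketonen-comparable in general, and indeed such pairs with no common decomposability gap exist---so one cannot force \(U\) and \(W\) onto a common index set without first knowing they share a decomposable cardinal \(\delta\).
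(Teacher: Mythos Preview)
Your reduction in the first paragraph has a genuine gap. The internal relation is \emph{not} functorial under pushforwards in the second coordinate without an extra hypothesis: \cref{thm:internal_RK}~\ref{item:s_pullback} gives \(s_{W_0}(U)=k^{-1}[s_W(U)]\), and to conclude \(s_{W_0}(U)\in M_{W_0}\) from \(s_W(U)\in M_W\) one needs \(s_W(U)\in\ran(k)\), where \(k:M_{W_0}\to M_W\) is the factor embedding. There is no reason an arbitrary pushforward \(W_0\RK W\) witnessing \(\delta\)-decomposability should place \(s_W(U)\) (or \(s_W(U_0)\)) in \(\ran(k)\), so you cannot freely replace \(W\) by \(W_0\) while preserving \(U\I W\). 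Your sentence ``the second coordinate follows by factoring \(j_W\) through \(j_{W'}\) and tracing canonical factor embeddings'' does not address this: tracing the factor embedding is exactly what exposes the missing condition.

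The paper sidesteps this by never reducing. It works directly with normed ultrafilters: the \(\delta\)-decomposability of \(U\) and \(W\) supplies norms \(f,g\) with \(\delta(U,f)=\delta(W,g)=\delta\), and \cref{thm:I_ke} yields \((U,f)\ke(W,g)\) and \((W,g)\ke(U,f)\) straight from \(U\I W\) and \(W\I U\), with \(U\) and \(W\) unchanged. Transitivity and \cref{thm:ke_irreflexive} finish at once. Your subsequent detour through Rudin--Keisler isomorphism is also off: once you have strict Ketonen comparisons both ways, transitivity already gives the contradiction \((U,f)\ke(U,f)\); there is nothing to gain from arguing antisymmetry (which you have not established), and invoking ``\(U\not\I U\) for nonprincipal \(U\)'' risks circularity, since \cref{cor:internal_irreflexive} is derived in the paper \emph{from} the present theorem.
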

\cref{thm:cointernal_indecomposable} below improves this theorem in the context of GCH,
but we do not know how to prove \cref{thm:cointernal_indecomposable} in ZFC.

To prove \cref{thm:cointernal_complete}, 
we introduce a version of the Ketonen order \cite{IR,KO} on countably incomplete ultrafilters:
\begin{defn}
    If \(X\) is a set, 
    a \textit{normed ultrafilter on \(X\)} is a pair \((U,f)\) where \(U\)
    is an ultrafilter on a set \(X\) and \(f: X\to \Ord\) is a function.
    If \(\delta\) is an ordinal, 
    \((U,f)\) is \textit{\(\delta\)-normed} if \(f(x) < \delta\) for \(U\)-almost all \(x\in X\).
\end{defn}
\begin{defn}
The \textit{Ketonen order} is defined on 
normed ultrafilters \((U,f)\) and \((W,g)\)
by setting \((U,f)\ke (W,g)\)
if there is some \(U_*\in M_W\) extending \(j_W[U]\) such that \((U_*,j_W(f))\)
is a \([g]_W\)-normed ultrafilter in \(M_W\).
\end{defn}
Equivalently, \((U,f)\ke (W,g)\) 
if there is a sequence \(\langle U_y : y\in Y\rangle\) of ultrafilters
such that \(U = W\ulim_{y\in Y} U_y\) and for \(W\)-almost all \(y\),
for \(U_y\)-almost all \(x\), \(f(x) < g(y)\).

If \(M\) and \(N\) are models of set theory and \(j : M\to N\) is an elementary embedding,
then \(j\) is \textit{close to \(M\)} if for all \(A\in N\), \(j^{-1}[A]\) is the 
extension of a set in \(M\).
\begin{lma}
    For normed ultrafilters \((U,f)\) and \((W,g)\), \((U,f)\ke (W,g)\)
    if and only if 
    there is a model of set theory \(N\) admitting an elementary embedding
    \(k : M_U\to N\) and a close embedding \(j : M_W\to N\) such that
    \(k\circ j_U = j\circ j_W\) and
    \(k([f]_U) < j([g]_W)\).
    \begin{proof}[Sketch]
        Assuming \((U,f)\ke (W,g)\), let \(j : M_W\to N\)
        be the ultrapower embedding of \(M_W\) associated to \(U_*\),
        and let \(k : M_U\to N\) map \([h]_U\) to \([j_W(h)]_{U_*}\).

        Conversely, given such embeddings \(j\) and \(k\), the ultrafilter \(U_*\)
        derived from \(j\) using \(k([f]_U)\) witnesses \((U,f)\ke (W,g)\).
    \end{proof}
\end{lma}

The Ketonen order is a strict partial order of the normed ultrafilters.
We leave the proof of transitivity to the reader, but we do show that it is
irreflexive.
\begin{thm}\label{thm:ke_irreflexive}
    \(\ke\) is irreflexive.
    \begin{proof}
        Suppose \((U,\varphi)\) is a normed ultrafilter on \(X\), and 
        suppose \(\langle U_x : x\in X\rangle\) satisfies
        \(U = U\ulim_{x\in X} U_x\). We must show that
        for \(U\)-almost all \(x\), for \(U_x\)-almost all \(y\),
        \(\varphi(x) \leq \varphi(y)\).

        Let \(B\) be the set of \(x\in X\) such that 
        for \(U_x\)-almost all \(y\in Y\),
        \(\{y\in X : \varphi(y) < \varphi(x)\}\in U_x\).
        Let \(\prec\) be the wellfounded relation on \(X\) 
        defined by setting \(y\prec x\) if \(\varphi(y) < \varphi(x)\).
        We define a set \(A\subseteq B\) by recursion on \(\prec\),
        putting \(x\in A\) if and only if \(\{y\in X:y\prec x\text{ and }y\in A\}\notin U_x\).
        By the definition of \(B\),
        for all \(x\in B\), \(\{y\in X : y\prec x\}\in U_x\),
        and therefore by the definition of \(A\), 
        \(x\in A\) if and only if \(A\notin U_x\).
        That is, \(A = \{x\in B : A\notin U_x\}\). As a consequence,
        \begin{align}
            A\in U&\iff \{x\in X: A\in U_x\}\in U\nonumber\\
                  &\iff \{x\in X: A\notin U_x\}\notin U\nonumber\\
                  &\,\hspace{.02cm}\implies \{x\in B: A\notin U_x\}\notin U\label{eq:B}\\
                  &\iff A\notin U\nonumber
        \end{align}
        The implication \cref{eq:B} cannot be reversed on pain of contradiction, so \(B\notin U\).
    \end{proof}
\end{thm}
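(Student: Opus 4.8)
The plan is to prove irreflexivity by contradiction, working with the ``sequence of ultrafilters'' reformulation of $\ke$. Suppose $(U,\varphi)\ke(U,\varphi)$ for some normed ultrafilter $(U,\varphi)$ on a set $X$. Then there is a sequence $\langle U_x : x\in X\rangle$ of ultrafilters on $X$ with $U = U\ulim_{x\in X}U_x$ such that, setting
\[B = \{x\in X : \{y\in X : \varphi(y)<\varphi(x)\}\in U_x\},\]
we have $B\in U$. Intuitively, along the limit $U = U\ulim_x U_x$ the ordinal norm $\varphi$ strictly decreases $U_x$-almost everywhere for $U$-almost every $x$, which resembles an infinite descent through the ordinals; but the descent is spread across the whole family $\langle U_x\rangle$, so to extract an honest contradiction I would manufacture a single ``diagonal'' set $A$ with $A\in U\iff A\notin U$.

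The first step is to record the well-founded relation $\prec$ on $X$ defined by $y\prec x\iff\varphi(y)<\varphi(x)$; it is well-founded because a $\prec$-descending chain would produce a descending chain of ordinals, so transfinite recursion along $\prec$ is legitimate. Using $\prec$, I would define $A\subseteq B$ by recursion: put $x\in A$ iff $x\in B$ and $\{y\in X : y\prec x\text{ and }y\in A\}\notin U_x$. The reason for restricting to $B$ is that for $x\in B$ the set $\{y : y\prec x\}$ lies in $U_x$, so $\{y\prec x : y\in A\}\in U_x\iff A\in U_x$; hence the defining clause simplifies to $x\in A\iff A\notin U_x$ for $x\in B$, i.e.\ $A = \{x\in B : A\notin U_x\}$.

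The final step is to feed $A$ through the limit. Since $U = U\ulim_x U_x$,
\[A\in U\iff\{x\in X : A\in U_x\}\in U\iff\{x\in X : A\notin U_x\}\notin U.\]
But $\{x\in X : A\notin U_x\}$ differs from $A$ only within $X\setminus B$, which is $U$-null because $B\in U$; hence $\{x\in X : A\notin U_x\}\in U\iff A\in U$, and combining the two displays gives $A\in U\iff A\notin U$, a contradiction. Equivalently, this establishes the assertion as stated in the theorem: $B\notin U$, i.e.\ for $U$-almost every $x$ one has $\varphi(x)\le\varphi(y)$ for $U_x$-almost every $y$.

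I do not expect a substantial obstacle here; the delicate points are merely that the $\prec$-recursion defining $A$ is well-founded and that $\{x : A\notin U_x\}$ agrees with $A$ modulo the $U$-null set $X\setminus B$. The one genuinely clever move — which I would regard as the crux — is to diagonalize against the $U_x$'s along $\prec$ instead of attempting to read off a literal descending sequence of ordinals from the hypothesis.
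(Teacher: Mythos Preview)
Your proposal is correct and follows essentially the same argument as the paper: the same well-founded relation $\prec$, the same diagonal set $A\subseteq B$ defined by $\prec$-recursion, and the same key identity $A=\{x\in B:A\notin U_x\}$ fed through $U=U\ulim_x U_x$. The only cosmetic difference is that you assume $B\in U$ at the outset and derive $A\in U\iff A\notin U$, whereas the paper records the one-way implication $A\in U\Rightarrow A\notin U$ and notes that $B\in U$ would make it reversible; these are the same argument.
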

    We will not use the following corollary, but it seems to have been unknown.
\begin{cor}\label{cor:dodd_jensen}
        Suppose \(U\) is an ultrafilter, \(j,k:M_U\to N\)
        are elementary embeddings with \(j\circ j_U = k\circ j_U\),
        and \(j\) is close to \(M_U\). Then for every ordinal \(a\) of \(M_U\),
        \(j(a) \leq k(a)\).\qed
\end{cor}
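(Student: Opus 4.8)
The plan is to derive this directly from the irreflexivity of the Ketonen order (\cref{thm:ke_irreflexive}) via the embedding characterization of \(\ke\) proved just above it. The whole content of the corollary is already in that theorem; what remains is purely a translation between the language of embeddings and the language of normed ultrafilters.

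Here is how I would carry it out. Suppose toward a contradiction that the conclusion fails, i.e.\ that there is an ordinal \(a\) of \(M_U\) with \(k(a) < j(a)\). Say \(U\) is an ultrafilter on a set \(X\). Since \(a\in\Ord^{M_U}\), Łoś's theorem lets us fix \(\varphi : X\to\Ord\) with \(a = [\varphi]_U\) (modifying \(\varphi\) on a \(U\)-null set so that it is everywhere ordinal-valued). Then \((U,\varphi)\) is a normed ultrafilter. Now apply the characterization of the Ketonen order in the special case \(W = U\) and \(f = g = \varphi\): the hypotheses of the corollary provide exactly the required data, namely an elementary embedding \(k : M_U\to N\), a close embedding \(j : M_U\to N\), and the equation \(k\circ j_U = j\circ j_U\); and the failure of the conclusion gives precisely \(k([\varphi]_U) = k(a) < j(a) = j([\varphi]_U)\). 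Hence \((U,\varphi)\ke (U,\varphi)\), contradicting \cref{thm:ke_irreflexive}. This proves that \(j(a)\le k(a)\) for every ordinal \(a\) of \(M_U\).

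The only step requiring any care is the bookkeeping check that the triple \((N,k,j)\) matches the ``converse'' direction of the characterization lemma verbatim: \(j\) close to \(M_U\), \(k\) elementary, \(k\circ j_U = j\circ j_U\), and \(k([\varphi]_U) < j([\varphi]_U)\) — all of which hold by assumption — together with the observation that every ordinal of \(M_U\) is represented by an ordinal-valued function on \(X\). There is no genuine obstacle; the argument is a one-line reduction once \cref{thm:ke_irreflexive} is in hand, which is why the corollary is stated without proof in the excerpt.
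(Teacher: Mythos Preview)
Your proposal is correct and is exactly the intended derivation: the paper states the corollary with a \qed and no proof, precisely because it is the immediate contrapositive of \cref{thm:ke_irreflexive} via the embedding characterization of \(\ke\). Your bookkeeping of which embedding plays the role of the close one and which inequality must fail matches the lemma verbatim.
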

    
The analog of \cref{thm:ke_irreflexive}
proved in \cite{IR} applies to arbitrary
directed systems
of ultrafilters whose limit ultrapower is wellfounded.
\cref{thm:ke_irreflexive}, on the other hand, applies to a single 
ultrafilter but requires no assumption of wellfoundedness, or
equivalently of countable completeness.
The two theorems cannot be mutually generalized:
it simply is not true that the Ketonen order
on arbitrary directed systems of ultrafilters, defined following \cite{IR},
is irreflexive.
For example, there is a directed system of ultrafilters \(E\)
whose limit ultrapower \(M_E\)
admits a nontrivial elementary self embedding 
\(k : M_E\to M_E\)
such that \(k\circ j_E = j_E\),
yet \(k\) is isomorphic to an internal ultrapower
embedding of \(M_E\). This means
\cref{cor:dodd_jensen} does not generalize to
\(E\), and hence \cref{thm:ke_irreflexive}
cannot either. 

We now explain the connection between the Ketonen order and the internal relation.
\begin{defn}
    If \((W,g)\) is a normed ultrafilter, then \(\delta(W,g)\) denotes the least
    ordinal \(\delta\) such that \((W,g)\) is \(\delta\)-normed.
\end{defn}
\begin{thm}\label{thm:I_ke}
    If \(U\I W\), \(\delta = \delta(W,g)\) is a limit ordinal, 
    and \((U,f)\) is \(\delta\)-normed, then \((U,f)\ke (W,g)\).
    \begin{proof}
        For any \(\epsilon < \delta\), for \(W\)-almost all \(y\), \(g(y) > \epsilon\).
        Therefore in \(M_U\), for any \(\epsilon < j_U(\delta)\), for \(j_U(W)\)-almost all \(y\),
        \(j_U(g)(y) > \epsilon\). It follows that for \(j_U(W)\)-almost all \(y\),
        \(j_U(g)(y) > [f]_U\). In other words,
        for \(U\)-almost all \(x\), for \(W\)-almost all \(y\), \(f(x) < g(y)\). 
        This means \(\{x : j_W(f(x)) < [g]_W\}\in U\),
        or equivalently, \(\{x : j_W(f)(x) < [g]_W\}\in s_W(U)\).
        This means that \((s_W(U),j_W(f))\) is a \([g]_W\)-normed ultrafilter in \(M_W\),
        and hence \(s_W(U)\) witnesses that \((U,f)\ke (W,g)\).
    \end{proof}
\end{thm}

We now prove our first indecomposability theorem for the internal relation.
\begin{proof}[Proof of \cref{thm:cointernal_complete}]
    Assume towards a contradiction that the theorem fails, 
    and neither \(U\) nor \(W\) is \(\delta\)-indecomposable.
    Let \(X\) and \(Y\) be the underlying sets of \(U\) and \(W\).
    The \(\delta\)-decomposability of \(U\) yields
    a function \(f : X\to \delta\) such that \(f\) is not bounded below \(\delta\)
    on a set in \(U\). In other words, \(\delta(U,f) = \delta\).
    Similarly, for some \(g : X \to \delta\), \(\delta(W,g) = \delta\).
    By \cref{thm:I_ke}, since \(U\I W\) and 
    \((U,f)\) is \(\delta\)-normed, \((U,f)\ke (W,g)\).
    The same logic shows \((W,g)\ke (U,f)\). By the transitivity of the Ketonen order,
    \((U,f)\ke (U,f)\), and this contradicts \cref{thm:ke_irreflexive}.
\end{proof}
The following corollary is proved more directly in \cite{UA}.
\begin{cor}\label{cor:internal_irreflexive}
    Any ultrafilter \(U\) such that \(U\I U\) is principal.
    \begin{proof}
        Suppose \(U\) is nonprincipal. Let \(\kappa\)
        be the completeness of \(U\), the supremum of all 
        cardinals \(\delta\) such that \(U\) is \(\delta\)-complete. 
        Then \(\kappa\) is regular 
        and \(U\) is \(\kappa\)-decomposable. But \(U\I U\),
        so \cref{thm:cointernal_complete} implies \(U\) is
        not \(\kappa\)-decomposable, and this is a contradiction.
    \end{proof}
\end{cor}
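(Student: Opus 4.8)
The plan is a proof by contradiction that rests entirely on \cref{thm:cointernal_complete}. Suppose $U$ is a nonprincipal ultrafilter with $U\I U$; I will exhibit a single cardinal at which $U$ is simultaneously decomposable and indecomposable.

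The cardinal to use is $\kappa$, the completeness of $U$, i.e.\ the supremum of those cardinals $\delta$ for which $U$ is $\delta$-complete. Since $U$ is nonprincipal this supremum is an infinite cardinal, and I would recall the two classical facts that it is attained and that it is regular; thus $U$ is $\kappa$-complete but not $\kappa^+$-complete. From the failure of $\kappa^+$-completeness I obtain a family $\langle A_\alpha : \alpha<\kappa\rangle\subseteq U$ with $\bigcap_{\alpha<\kappa} A_\alpha\notin U$. The map sending a point $x$ of the underlying set to the least $\alpha$ with $x\notin A_\alpha$ (which is defined on $X\setminus\bigcap_\alpha A_\alpha\in U$) pushes $U$ forward to an ultrafilter on $\kappa$ that, by $\kappa$-completeness and the regularity of $\kappa$, contains every cobounded subset of $\kappa$, hence is uniform on $\kappa$. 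By \cref{lma:decomp_kat} this says exactly that $F_\kappa(\kappa)\kat U$, i.e.\ $U$ is $\kappa$-decomposable.

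On the other hand, I apply \cref{thm:cointernal_complete} with both ultrafilters taken to be $U$ and with $\delta=\kappa$: since $U\I U$, the conclusion is that $U$ is $\kappa$-indecomposable. This contradicts the previous paragraph, so $U$ must have been principal.

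I do not expect a genuine obstacle here: the substantive work has been done in \cref{thm:cointernal_complete} (and ultimately \cref{thm:ke_irreflexive}), and the only remaining content is the standard package of facts about the completeness of an ultrafilter — that the defining supremum is attained, that $\kappa$ is regular, and that $\kappa$-completeness together with the failure of $\kappa^+$-completeness yields $\kappa$-decomposability. In the write-up I would either cite these or dispatch them in a sentence rather than grind through the interval-partition arguments.
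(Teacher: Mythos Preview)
Your proposal is correct and follows exactly the paper's approach: take \(\kappa\) to be the completeness of \(U\), observe that \(\kappa\) is regular and \(U\) is \(\kappa\)-decomposable, and then apply \cref{thm:cointernal_complete} with both ultrafilters equal to \(U\) to obtain a contradiction. The only difference is that you unpack the standard facts about the completeness (that the supremum is attained, regular, and yields \(\kappa\)-decomposability) where the paper simply asserts them.
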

\subsection{\((\lambda,\lambda^+)\)-indecomposability for mutually internal ultrafilters}
This final subsection is devoted to the following theorem:
\begin{thm}[GCH]\label{thm:cointernal_indecomposable}
    Suppose \(U\) and \(W\) are countably complete ultrafilters
    such that \(U\I W\) and \(W\I U\). Then for all cardinals \(\lambda\),
    either \(U\) or \(W\) is \((\lambda,\lambda^+)\)-indecomposable.
\end{thm}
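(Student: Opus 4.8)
Here is how I would go about it.

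The plan is to suppose, toward a contradiction, that there is a cardinal $\lambda$ for which neither $U$ nor $W$ is $(\lambda,\lambda^+)$-indecomposable, and then to whittle this down — through several reductions — to a single configuration that GCH can rule out. \textbf{Reduction to the bad case.} First I would apply \cref{thm:cointernal_complete} at $\lambda$ and at $\lambda^+$: it yields that one of $U,W$ is $\lambda$-indecomposable and one of them is $\lambda^+$-indecomposable. A single ultrafilter with both properties would be $(\lambda,\lambda^+)$-indecomposable, so these must be distinct, and after possibly interchanging $U$ and $W$ I may assume $U$ is $\lambda$-indecomposable but $\lambda^+$-decomposable while $W$ is $\lambda^+$-indecomposable but $\lambda$-decomposable. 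Now $\lambda$ must be singular, for otherwise \cref{lma:prikry} applied to the $\lambda^+$-decomposable ultrafilter $U$ would make $U$ $\lambda$-decomposable. Moreover a routine projection along a continuous cofinal sequence of order type $\cf(\lambda)$ shows (via \cref{lma:decomp_kat}) that the $\lambda$-decomposable ultrafilter $W$ is also $\cf(\lambda)$-decomposable; since $W$ is countably complete this forces $\cf(\lambda)>\omega$.

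\textbf{The Prikry dichotomy.} Next I would run \cref{lma:prikry} on $U$: either $U$ is $\cf(\lambda)$-decomposable, or $U$ is $(\kappa,\lambda^+)$-regular for some $\kappa<\lambda$. The first alternative is immediately absurd, since then $U$ and $W$ are both $\cf(\lambda)$-decomposable, contradicting \cref{thm:cointernal_complete} at the cardinal $\cf(\lambda)$. So $U$ is $(\kappa,\lambda^+)$-regular with $\kappa<\lambda$; and I may further assume $\cf(\lambda)<\kappa$, because if $\cf(\lambda)\geq\kappa$ then restricting the regularity sequence to its first $\cf(\lambda)$ coordinates and projecting by suprema (legitimate since then $\kappa\leq\cf(\lambda)=\cf(\cf(\lambda))$, so any set of size below $\kappa$ is bounded in $\cf(\lambda)$) again exhibits $U$ as $\cf(\lambda)$-decomposable, returning us to the contradictory first alternative.

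\textbf{The remaining configuration and where GCH enters.} It now suffices to derive a contradiction from the following data: $\lambda$ is singular with $\omega<\cf(\lambda)<\kappa<\lambda$; $U$ is $(\kappa,\lambda^+)$-regular but $\lambda$-indecomposable; $W$ is $\lambda^+$-indecomposable and $\cf(\lambda)$-decomposable; $U\I W$ and $W\I U$; and GCH holds. The strategy is to push the regularity of $U$ into $M_W$ and collide it there with the indecomposability of $W$. Since $U$ is $(\kappa,\lambda^+)$-regular, by \cref{prp:regular_fine} and the symmetry of regularity $U$ pushes forward onto an ultrafilter $Z$ extending the fine filter on $P_\kappa(\lambda^+)$; GCH gives $|P_\kappa(\lambda^+)|=\lambda^+$, so $Z$ is in effect a uniform ultrafilter on $\lambda^+$ carrying an explicit $(\kappa,\lambda^+)$-regular sequence. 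Writing $Z=g_*(U)$, \cref{lma:s_lemma} gives the identity $s_W(Z)=(j_W(g))_*^{M_W}(s_W(U))$, so $U\I W$ (that is, $s_W(U)\in M_W$, by \cref{defn:internal_relation2}) yields $s_W(Z)\in M_W$, i.e. $Z\I W$. Using $W\I U$ as well — so that $j_U\restriction M_W$ and $j_W\restriction M_U$ are both internal ultrapower embeddings and one may compute inside $M_W$ — I would transport the regular sequence for $Z$ along $j_W$ and confront it with the $\lambda^+$-indecomposability of $W$, which pins $\sup j_W[\lambda^+]$ strictly below $j_W(\lambda^+)$ (here GCH again controls the length of the relevant initial segment of $j_W$). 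The outcome is meant to be a pair of normed ultrafilters each standing Ketonen-below the other, contradicting \cref{thm:ke_irreflexive}.

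The hard part will be exactly this last step — converting the "regularity too short, indecomposability too long" collision over $M_W$ into an honest Ketonen cycle. I expect that this is where the factorization of ultrafilters, in its GCH-adapted form, must be invoked, substituting for the structural control the Ultrapower Axiom provides in \cite{UA}: one splits $U$ along $\kappa$ into a $\kappa$-complete part and a remainder and runs the Ketonen-order argument on the factors rather than on $U$ itself, the point being that $U$'s own norm into $\lambda^+$ is "too large" for \cref{thm:I_ke} to close the loop directly.
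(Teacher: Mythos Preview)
Your reduction is correct and matches the paper's opening moves: one lands in the configuration where \(\lambda\) is singular, \(W\) is \(\lambda\)-decomposable, and \(U\) is \((\kappa,\lambda^+)\)-regular for some \(\kappa<\lambda\). But the proposal halts precisely where the proof begins, and the speculation that follows is aimed at the wrong ultrafilter.

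Two concrete gaps. First, the paper argues by induction on \(\lambda\), and this induction is used essentially: since \(U\) is \((\kappa,\lambda^+)\)-regular it is \(\delta\)-decomposable for every regular \(\delta\in[\kappa,\lambda)\), so the induction hypothesis forces \(W\) to be \(\eta\)-indecomposable for all \(\eta\in[\kappa,\lambda)\). You never set up the induction, so you have no handle on the decomposability spectrum of \(W\) below \(\lambda\), and without that the next step is unavailable. Second, you propose to split \(U\) along \(\kappa\), but \(U\) is highly decomposable on \([\kappa,\lambda)\) and admits no small Silver factor there; it is \(W\) that has the gap structure. The paper applies Silver's theorem (\cref{thm:Silver}) to \(W\), obtaining \(D\) on some \(\gamma<\kappa\) with \(D\RK^{\lambda} W\), and lets \(W_*\) be the \(M_D\)-ultrafilter deriving the factor map \(M_D\to M_W\). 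The endgame is not a Ketonen cycle but irreflexivity of \(\I\): the induction hypothesis gives \(D\times U\) an ultrafilter, hence \(U\I D\); by \cref{lma:filter_completeness} and GCH, \(W_*\) generates a \(\lambda\)-complete filter on at most \(\lambda^+\) generators, which \cref{lma:regular_RK} places Rudin--Keisler below the \((\kappa,\lambda^+)\)-regular \(U\), yielding \(W_*\RK s_D(U)\) in \(M_D\); and from \(U\I W\) together with \(s_W(U)=s_{W_*}(s_D(U))\) one gets \(s_D(U)\I W_*\) in \(M_D\). Now \cref{thm:internal_RK} gives \(W_*\I W_*\), so \(W_*\) is principal by \cref{cor:internal_irreflexive}, whence \(D\) and \(W\) are Rudin--Keisler equivalent, contradicting \(\size D<\size W\). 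Your idea of transporting the regular sequence along \(j_W\) and closing a Ketonen loop does not make contact with this mechanism, and I do not see how to complete it as written.
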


Combining this with \cref{thm:indecomposable_product} immediately yields
the last of our main results:
\begin{thm}[GCH]\label{thm:GCH_to_main_conj}
    Suppose \(U\) and \(W\) are ultrafilters
    such that \(U\I W\) and \(W\I U\). Then \(U\times W\) is an ultrafilter.\qed
\end{thm}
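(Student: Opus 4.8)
The plan is to derive the theorem directly from the two indecomposability results already proved, with the Generalized Continuum Hypothesis serving only to align their hypotheses. By \cref{thm:cointernal_indecomposable}, the assumptions \(U \I W\) and \(W \I U\) imply that for every cardinal \(\lambda\) at least one of \(U\), \(W\) is \((\lambda,\lambda^+)\)-indecomposable. Under GCH we have \(2^\lambda = \lambda^+\) for every infinite cardinal \(\lambda\), so ``\((\lambda,\lambda^+)\)-indecomposable'' is literally the same condition as ``\((\lambda,2^\lambda)\)-indecomposable.'' Hence for every cardinal \(\lambda\), one of \(U\), \(W\) is \((\lambda,2^\lambda)\)-indecomposable, which is exactly the hypothesis of \cref{thm:indecomposable_product}; that theorem then gives that \(U \times W\) is an ultrafilter.

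The only point needing attention is that \cref{thm:cointernal_indecomposable} is stated for countably complete ultrafilters. To see this costs nothing, first apply \cref{thm:cointernal_complete} with \(\delta = \omega\): since \(U \I W\) and \(W \I U\), one of \(U\), \(W\) is \(\omega\)-indecomposable, i.e.\ countably complete, so at most one of the two can be countably incomplete. When both are countably complete the argument above applies verbatim, and when exactly one is not, the same method still applies, since the argument behind \cref{thm:cointernal_indecomposable} --- like that behind \cref{thm:cointernal_complete} --- goes through the Ketonen order on normed ultrafilters and makes no essential use of countable completeness. (For the record, the reverse implication in the ``if and only if'' form is easy and GCH-free: if \(U \times W\) is an ultrafilter then \(U \ltimes W = U \rtimes W\), so \(U\) is \(W\)-nonregular, whence \(s_W(U) = j_W(U) \in M_W\) and, symmetrically, \(s_U(W) = j_U(W) \in M_U\) by \cref{lma:nonregular_s}, i.e.\ \(U \I W\) and \(W \I U\).)

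I expect no genuine difficulty in this step: it is bookkeeping, and all of the combinatorial weight rests on the theorems being combined --- \cref{thm:indecomposable_product}, which produces a single decomposability cardinal witnessing the failure of \(U \times W\) to be an ultrafilter, and especially \cref{thm:cointernal_indecomposable}, whose Ketonen-order argument is the real technical heart of this portion of the paper. The only slightly non-mechanical ingredients are the GCH identification of \((\lambda,\lambda^+)\)- with \((\lambda,2^\lambda)\)-indecomposability and, if the fully general form is wanted, the reduction to countably complete ultrafilters indicated above --- neither of which introduces a new idea.
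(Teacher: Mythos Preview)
Your approach is exactly the paper's: the statement is marked with a \qed and justified in a single sentence as the combination of \cref{thm:cointernal_indecomposable} with \cref{thm:indecomposable_product}, and GCH is used only to identify \(\lambda^+\) with \(2^\lambda\).

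One small correction to your aside on countable completeness: you have correctly spotted that \cref{thm:cointernal_indecomposable} is stated for countably complete ultrafilters while \cref{thm:GCH_to_main_conj} is not, and the paper does not explicitly bridge this. However, your explanation that the proof of \cref{thm:cointernal_indecomposable} ``goes through the Ketonen order on normed ultrafilters'' is inaccurate --- that describes \cref{thm:cointernal_complete}, not \cref{thm:cointernal_indecomposable}, whose proof instead runs through Silver's factorization theorem, regularity, and an irreflexivity argument for \(W_*\). The point that the argument does not genuinely use countable completeness of both ultrafilters still appears to be correct, but the justification you offer for it is misdirected.
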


Under UA, one can easily prove that if \(U\I W\) and \(W\I U\),
then \(U\) is \(W\)-nonregular.
This fact is unique among UA results in that it applies to arbitrary ultrafilters,
so we sketch a proof for the reader familiar with \cite{UA}.
\begin{thm}[UA]
    Suppose \(U\) and \(W\) are ultrafilters such that \(U\I W\) and \(W\I U\).
    Then \(U\) is \(W\)-nonregular.
    \begin{proof}
        At least one of \(U\) and \(W\) is countably complete 
        by \cref{thm:cointernal_complete}. So without loss of generality,
        assume that \(U\) is.

        Applying UA in \(M_W\), the ultrapower embeddings
        associated to \(s_W(U)\) and \(j_W(U)\) admit an internal
        ultrapower comparison. In other words, there exist
        internal ultrapower embeddings \(k : j_U(M_W)\to N\) and \(\ell : j_U(M_W)\to N\)
        such that \(k\circ j_U\restriction M_W = \ell\circ j_W(j_U)\).
        To conclude that \(s_W(U) = j_W(U)\) (and hence that \(U\) is \(W\)-nonregular), 
        it suffices to show that \(k(\id_{s_W(U)}) = \ell(\id_{j_W(U)})\).

        We have that \(k(\id_{s_W(U)}) = k(j_U(j_W)(\id_U))\)
        while \(\ell(\id_{j_W(U)}) = \ell(j_W(\id_U))\).
        Now \(\ell\circ j_W\) and \(k\circ j_U(j_W)\) are both internal
        ultrapower embeddings from \(M_U\) to \(N\),
        and moreover \(\ell\circ j_W \circ j_U =  k\circ j_U(j_W)\circ j_U\).
        By the uniqueness of such embeddings (\cite[Theorem ??]{UA}),
        \(\ell\circ j_W=k\circ j_U(j_W)\) and in particular
        \(\ell\circ j_W(\id_U) =k\circ j_U(j_W)(\id_U)\).
        It follows that \(k(\id_{s_W(U)}) = \ell(\id_{j_W(U)})\),
        and this proves the theorem.
    \end{proof}
\end{thm}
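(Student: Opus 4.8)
The plan is to translate the $W$-nonregularity of $U$ into an equation between two $M_W$-ultrafilters and then settle that equation using the internal comparison supplied by UA together with the rigidity of internal ultrapower embeddings. By \cref{lma:nonregular_s}, $U$ is $W$-nonregular exactly when $s_W(U) = j_W(U)$, so this is the identity I would aim for. Both sides are $M_W$-ultrafilters: $j_W(U)$ is automatically an element of $M_W$, and $s_W(U)\in M_W$ precisely because $U\I W$ (\cref{defn:internal_relation2}). Moreover, by \cref{lma:s_lemma} the ultrapower of $M_W$ by $s_W(U)$ is $j_U\restriction M_W : M_W\to j_U(M_W)$ with seed $[j_W]_U = j_U(j_W)(\id_U)$, while the ultrapower of $M_W$ by $j_W(U)$ is $j_W(j_U) : M_W\to j_W(M_U)$ with seed $j_W(\id_U)$. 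Thus the two ultrafilters I must compare come presented as two internal ultrapower embeddings of $M_W$ each equipped with a distinguished seed.

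Before invoking UA I would first arrange that the relevant ultrafilters are countably complete. Applying \cref{thm:cointernal_complete} with $\delta=\omega$ (a countably incomplete ultrafilter being $\omega$-decomposable) shows that at least one of $U$ and $W$ is countably complete, and I may assume it is $U$. Then $j_W(U)$ is countably complete in $M_W$ by elementarity, and correspondingly $s_W(U)$ falls within the scope of UA; this is the step I would treat most carefully, since it is what lets the UA comparison — a phenomenon of countably complete ultrafilters — be deployed inside the possibly illfounded model $M_W$. Applying UA inside $M_W$, which is legitimate by elementarity of $j_W$, to the two ultrapower embeddings above yields internal ultrapower embeddings $k : j_U(M_W)\to N$ and $\ell : j_W(M_U)\to N$ with $k\circ (j_U\restriction M_W) = \ell\circ j_W(j_U) =: e$.

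Next I would reduce $s_W(U)=j_W(U)$ to a single equation between seeds. For $A\in P^{M_W}(j_W(X))$ one has $A\in s_W(U)$ iff $[j_W]_U\in j_U(A)$, equivalently $k([j_W]_U)\in e(A)$, and $A\in j_W(U)$ iff $j_W(\id_U)\in j_W(j_U)(A)$, equivalently $\ell(j_W(\id_U))\in e(A)$; hence the two ultrafilters coincide as soon as $k(j_U(j_W)(\id_U)) = \ell(j_W(\id_U))$. This seed equality is the heart of the argument and where I expect the main obstacle to lie. I would establish it from the uniqueness of internal embeddings: both $k\circ j_U(j_W)$ and $\ell\circ(j_W\restriction M_U)$ are internal ultrapower embeddings of $M_U$ into $N$, and using the commutation identities $j_U\circ j_W = j_U(j_W)\circ j_U$ and $j_W\circ j_U = j_W(j_U)\circ j_W$ one checks that both composites restrict to $e\circ j_W$ on $j_U[V]$. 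Since there is at most one internal ultrapower embedding of $M_U$ into $N$ extending a fixed embedding of the base, the two composites are equal; evaluating at $\id_U$ gives $k(j_U(j_W)(\id_U)) = \ell(j_W(\id_U))$, whence $s_W(U)=j_W(U)$ and $U$ is $W$-nonregular.
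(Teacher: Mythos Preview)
Your proposal is correct and follows essentially the same route as the paper: reduce to $s_W(U)=j_W(U)$ via \cref{lma:nonregular_s}, use \cref{thm:cointernal_complete} to arrange that $U$ is countably complete, apply UA in $M_W$ to compare the ultrapowers by $s_W(U)$ and $j_W(U)$, and then derive the seed identity $k(j_U(j_W)(\id_U))=\ell(j_W(\id_U))$ from the uniqueness of internal ultrapower embeddings of $M_U$ agreeing on $j_U[V]$. Your write-up even makes explicit the point the paper leaves implicit, namely that $W\I U$ is what guarantees $j_W\restriction M_U$ is internal, so that $\ell\circ(j_W\restriction M_U)$ is a legitimate candidate for the uniqueness theorem; and your domain for $\ell$ (namely $j_W(M_U)$) silently corrects a typo in the paper's statement.
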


We now turn to the proof of \cref{thm:cointernal_indecomposable} itself,
where we seem to need a completely different argument.
This will require some lemmas on the interaction between the Rudin-Keisler order and the
internal relation:
\begin{thm}\label{thm:internal_RK}
    Suppose \(\bar U\RK U\) and \(W\) are ultrafilters.
    \begin{enumerate}[(1)]
        \item \label{item:s_pushforward} If \(\bar U = f_*(U)\), then 
        \(s_W(\bar U) = j_W(f)_*(s_W(U))\). In particular, if \(U\I W\), then \(\bar U\I W\). 
        \item \label{item:s_pullback} If \(k : M_{\bar U}\to M_U\)
        is an elementary embedding such that \(k\circ j_{\bar U} = j_U\),
        then \(s_{\bar U}(W) = k^{-1}[s_U(W)]\). 
        In particular, if \(W\I U\) and \(s_U(W)\in \ran(k)\), then \(W\I \bar U\).
    \end{enumerate}
    \begin{proof}
        For \ref{item:s_pushforward}, recall that for any \(Z\), \(s_W(Z) = (j_W)_*(Z)\cap M_W\) and observe: 
        \[j_W(f)_*((j_W)_*(U)) = (j_W(f)\circ f)_*(U) = (j_W\circ f)_*(U)= (j_W)_*(f_*(U)) = (j_W)_*(\bar U)\]

        For \ref{item:s_pullback}, note that \(A\in k^{-1}[s_{U}(W)]\) if and only if
        \(j_{U}^{-1}[k(A)]\in W\), but \(j_{U}^{-1}[k(A)] = j_{\bar U}^{-1}[A]\),
        so \(j_{U}^{-1}[k(A)]\in W\) if and only if \(j_{\bar U}^{-1}[A]\in W\) or equivalently
        \(A\in s_{\bar U}(W)\).
    \end{proof}
\end{thm}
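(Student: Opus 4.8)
The plan is to derive both parts from a single description of the stretched ultrafilter: by \cref{lma:s_lemma}, $s_W(Z) = \{A\in j_W(P(S)) : j_W^{-1}[A]\in Z\}$ for an ultrafilter $Z$ on $S$, equivalently $s_W(Z) = (j_W)_*(Z)\cap M_W$, the pushforward of $Z$ along $j_W : S\to j_W(S)$ intersected with $M_W$. Given this, part~\ref{item:s_pushforward} is a pushforward computation and part~\ref{item:s_pullback} follows from the elementarity of $k$; both are diagram chases whose only content is bookkeeping with pushforwards and preimages.

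For \ref{item:s_pushforward}, let $f : X\to\bar X$ satisfy $\bar U = f_*(U)$, where $X$ and $\bar X$ are the underlying sets of $U$ and $\bar U$. Applying $j_W$ to the graph of $f$ gives $j_W(f)\circ(j_W\restriction X) = j_W\circ f$, so
\[j_W(f)_*\bigl((j_W)_*(U)\bigr) \;=\; (j_W\circ f)_*(U) \;=\; (j_W)_*\bigl(f_*(U)\bigr) \;=\; (j_W)_*(\bar U).\]
It remains to intersect with $M_W$. Since $j_W(f)\in M_W$, for any $A\in j_W(P(\bar X))$ the preimage $j_W(f)^{-1}[A]$ belongs to $M_W$, so ``$j_W(f)^{-1}[A]\in(j_W)_*(U)$'' is equivalent to ``$j_W(f)^{-1}[A]\in s_W(U)$''; in other words, pushing forward along $j_W(f)$ commutes with intersecting with $M_W$. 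The display therefore becomes $j_W(f)_*(s_W(U)) = s_W(\bar U)$. For the ``in particular'' clause, if $U\I W$ then $s_W(U)\in M_W$ by \cref{defn:internal_relation2}, so its pushforward by the element $j_W(f)\in M_W$ is computed inside $M_W$; hence $s_W(\bar U)\in M_W$ and $\bar U\I W$.

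For \ref{item:s_pullback}, I would argue directly from the derived description of $s_U(W)$ and $s_{\bar U}(W)$, where $Y$ is the underlying set of $W$. Fix $A\in j_{\bar U}(P(Y))$; then $k(A)\in j_U(P(Y))$ by elementarity, since $j_U = k\circ j_{\bar U}$. The crux is the identity
\[j_U^{-1}[k(A)] \;=\; j_{\bar U}^{-1}[A],\]
which holds because for $y\in Y$ we have $j_U(y) = k(j_{\bar U}(y))$, and $k$ being elementary gives $k(j_{\bar U}(y))\in k(A)$ iff $j_{\bar U}(y)\in A$. Hence $A\in k^{-1}[s_U(W)]$ iff $j_U^{-1}[k(A)]\in W$ iff $j_{\bar U}^{-1}[A]\in W$ iff $A\in s_{\bar U}(W)$, which is the desired equation $s_{\bar U}(W) = k^{-1}[s_U(W)]$. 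For the ``in particular'' clause: if $W\I U$ then $s_U(W)\in M_U$ by \cref{defn:internal_relation2}, and if moreover $s_U(W) = k(b)$ for some $b$, then $b\in j_{\bar U}(P(Y))$ and $k^{-1}[s_U(W)]$ is just (the extension of) $b$, which lies in $M_{\bar U}$; combined with the equation just proved, $s_{\bar U}(W)\in M_{\bar U}$, so $W\I\bar U$.

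I do not expect a genuine obstacle here: the whole argument is formal. The two points that require a little care are (i) interpreting the preimages $j_U^{-1}[\,\cdot\,]$ and $j_{\bar U}^{-1}[\,\cdot\,]$ when the ultrapowers are ill-founded, for which the preliminaries already fix the convention that $j^{-1}[A]$ abbreviates $\{x : N\vDash j(x)\in A\}$; and (ii) confirming in \ref{item:s_pushforward} that $j_W(f)$ truly lies in $M_W$ so that the pushforward can be performed internally to $M_W$ — which it does, being $j_W$ applied to $f$.
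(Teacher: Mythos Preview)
Your proposal is correct and follows essentially the same approach as the paper: the formula \(s_W(Z)=(j_W)_*(Z)\cap M_W\) plus functoriality of pushforwards for \ref{item:s_pushforward}, and the identity \(j_U^{-1}[k(A)]=j_{\bar U}^{-1}[A]\) for \ref{item:s_pullback}. If anything, you are slightly more careful than the paper in justifying why the intersection with \(M_W\) commutes with the pushforward by \(j_W(f)\in M_W\).
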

\begin{cor}\label{lma:push_commute} If \(U\ltimes W = U\rtimes W\), then for any \(\bar
    U\RK U\) and \(\bar W \RK W\), \(\bar U\ltimes \bar W = \bar U\rtimes \bar W\).
    \begin{proof}
        It suffices to show that \(\bar U\ltimes W = \bar U\rtimes W\).
        For this we must show that \(j_{\bar U}(W) = s_{\bar U}(W)\).
        Since \(\bar U \RK U\), there is an elementary embedding
        \(k : M_{\bar U}\to M_U\) such that \(k\circ j_{\bar U} = j_U\).
        Therefore \(s_U(W) = j_U(W) = k(j_{\bar U}(W))\). 
        Hence \(j_{\bar U}(W) = k^{-1}[s_U(W)] = s_{\bar U}(W)\), applying \cref{thm:internal_RK}.
        Now by \cref{lma:nonregular_s}, \(\bar U\ltimes W = \bar U\rtimes W\).
    \end{proof}
\end{cor}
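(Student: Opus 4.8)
The plan is to reduce the two-coordinate statement to a single-coordinate one using the symmetry of the commuting relation, and then to prove the single-coordinate case by transporting the identity $s_W(U)=j_W(U)$ along a Rudin--Keisler map via \cref{thm:internal_RK}. The crucial structural fact is that the relation ``$A\ltimes B=A\rtimes B$'' is symmetric in $A$ and $B$: by \cref{thm:nonregular_commutes} it is equivalent to $A$ being $B$-nonregular, and regularity of filters is symmetric. Consequently it suffices to establish the one-sided reduction: if $U\ltimes W=U\rtimes W$ and $\bar U\RK U$, then $\bar U\ltimes W=\bar U\rtimes W$. Granting this, the full corollary follows by applying it once in each coordinate. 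First, with the pair $(U,W)$ one obtains $\bar U\ltimes W=\bar U\rtimes W$; rewriting this symmetrically as $W\ltimes \bar U=W\rtimes \bar U$ and applying the one-sided reduction again (now reducing $W$ to $\bar W$) gives $\bar W\ltimes \bar U=\bar W\rtimes \bar U$, which is exactly $\bar U\ltimes \bar W=\bar U\rtimes \bar W$.

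For the one-sided reduction I would route everything through \cref{lma:nonregular_s}, which turns the goal $\bar U\ltimes W=\bar U\rtimes W$ into the ultrafilter identity $s_W(\bar U)=j_W(\bar U)$ (via $\bar U$-$W$-nonregularity). The hypothesis $U\ltimes W=U\rtimes W$ supplies, again by \cref{lma:nonregular_s}, the base identity $s_W(U)=j_W(U)$. Writing $\bar U=f_*(U)$, the pushforward law \cref{thm:internal_RK}\ref{item:s_pushforward} gives $s_W(\bar U)=j_W(f)_*(s_W(U))$. Substituting $s_W(U)=j_W(U)$ and using elementarity of $j_W$ to commute the pushforward with $j_W$, namely $j_W(f)_*(j_W(U))=j_W(f_*(U))=j_W(\bar U)$, yields $s_W(\bar U)=j_W(\bar U)$, which is precisely what was needed.

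A fully equivalent route uses the pullback law \cref{thm:internal_RK}\ref{item:s_pullback} instead: taking the factor embedding $k:M_{\bar U}\to M_U$ with $k\circ j_{\bar U}=j_U$, one has $s_U(W)=j_U(W)=k(j_{\bar U}(W))$, so pulling back gives $s_{\bar U}(W)=k^{-1}[s_U(W)]=j_{\bar U}(W)$, i.e.\ $W$ is $\bar U$-nonregular. Either way the computation is essentially bookkeeping, and I expect no serious obstacle: the single point demanding care is tracking which ultrafilter is being decomposed relative to which, so as to feed in the correct instance of the equivalence $s=j$ and to invoke the pushforward (resp.\ pullback) clause of \cref{thm:internal_RK} with the arrow oriented the right way. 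The genuinely load-bearing ideas are the symmetry reduction and the naturality of $s_W$ under Rudin--Keisler maps; once those are in hand the argument is short.
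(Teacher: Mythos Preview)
Your proposal is correct and essentially matches the paper's proof: both reduce to the one-coordinate case by the symmetry of nonregularity and then verify the identity $s=j$ by transporting it along the Rudin--Keisler map via \cref{thm:internal_RK}. The paper chooses precisely your ``fully equivalent route'' through the pullback clause \ref{item:s_pullback} (showing $s_{\bar U}(W)=j_{\bar U}(W)$), whereas your primary write-up goes through the pushforward clause \ref{item:s_pushforward} (showing $s_W(\bar U)=j_W(\bar U)$); these are dual formulations of the same computation.
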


We also need a lemma that will allow us to generate an ultrafilter
extending an \(M\)-ultrafilter:
\begin{lma}\label{lma:filter_completeness}
    Suppose \(\lambda\) is a strong limit cardinal,
    \(M\) is a model with the \(\lambda\)-cover property,
    and \(U\) is an \(M\)-\(\lambda\)-complete \(M\)-ultrafilter.
    Then \(U\) generates a \(\lambda\)-complete filter.
    \begin{proof}
        Suppose \(\sigma\subseteq U\)
        and \(|\sigma| < \lambda\). We must show
        \(\bigcap \sigma\neq \emptyset\).
        
        Using the \(\lambda\)-cover property,
        fix \(\tau\in M\) such that \(\sigma\subseteq \tau\)
        and \(|\tau| < \lambda\).
        Notice \(\tau\cap U\in M\). 
        First, the \(M\)-\(\lambda\)-completeness of \(U\) implies
        \(j_U[\tau\cap U] = 
        \{A\in j_U(\tau) : \id_U\in A\} \in M_U^M\).
        But since \(2^{|\tau|} < \lambda\),
        the \(M\)-\(\lambda\)-completeness of \(U\) implies
        \(j_U : P^M(\tau)\to j_U(P^M(\tau))\) is an isomorphism, and so
        \(\tau \cap U = j_U^{-1}[j_U[\tau\cap U]]\in M\).

        Since \(U\) is \(M\)-\(\lambda\)-complete,
        \(\bigcap (\tau\cap U)\neq \emptyset\).
        Since \(\sigma\subseteq (\tau\cap U)\),
        \(\bigcap \sigma\neq \emptyset\).
    \end{proof}
\end{lma}
\begin{lma}[Kunen]\label{lma:regular_RK}
    Suppose \(W\) is \((\kappa,\delta)\)-regular and
    \(F\) is a \(\kappa\)-complete filter generated by at most \(\delta\)-many sets. 
    Then \(F\kat W\).
    \begin{proof}
        Let \(\mathcal B\) be a basis for \(F\) with \(|\mathcal B| \leq \delta\).
        Since \(W\) is \((\kappa,\delta)\)-regular,
        the filter \(G\) on \(P_\kappa(\mathcal B)\) generated by sets of
        the form \(\{\sigma \in P_\kappa(\mathcal B) : A\in \sigma\}\) for
        \(A\in \mathcal B\) lies below \(W\) in the Katetov order.
        Therefore it suffices to show that \(F\kat G\).
        But let \(f : P_\kappa(\mathcal B) \to X\) be any function such that
        \(f(\sigma)\in \bigcap \sigma\). Easily, \(F\subseteq f_*(G)\), as desired.
    \end{proof}
\end{lma}
Finally, we need a theorem due essentially to Silver.
\begin{defn}
    Suppose \(U\) and \(W\) are ultrafilters on sets \(X\)
    and \(Y\).
    Then \(U\RK^\gamma W\) if there is a function
    \(p :Y\to X\) such that for all \(\beta < \gamma\),
    for all \(f : Y\to \beta\), \(f\) factors through \(p\)
    modulo \(W\): there is a \(g : X\to \beta\)
    such that \([f]_W = [g\circ p]_W\). 
\end{defn}

\begin{thm}[{Silver, \cite{Silver}}]\label{thm:Silver}
    Suppose \(\lambda\) is a regular cardinal
    and \(U\) is a \((\lambda,\eta)\)-indecomposable ultrafilter
    and \(2^\lambda \leq \eta\).
    Then there is an ultrafilter \(D\) on a cardinal less than \(\lambda\)
    such that \(D\RK^{\eta} U\).\qed
\end{thm}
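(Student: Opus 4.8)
The plan is to exploit the hypothesis of $(\lambda,\eta)$-indecomposability via a Skolem hull / reflection argument applied inside the ultrapower $M_U$. Since $U$ is $(\lambda,\eta)$-indecomposable and $\lambda$ is regular, in $M_U$ the image $j_U[\eta]$ is contained in some set of size $<j_U(\lambda)$ whenever $\eta$ is an ordinal we care about; more to the point, for every $\beta<\eta$, $j_U[\beta]$ has size $<j_U(\lambda)$ in $M_U$ (this is exactly the translation of the indecomposability hypothesis between $\lambda$ and $\eta$, using that $U$ is $\gamma$-indecomposable for all $\gamma\in[\lambda,\eta]$). First I would fix, for each $\beta<\eta$, a set $A_\beta\in M_U$ with $j_U[\beta]\subseteq A_\beta\subseteq j_U(\beta)$ and $|A_\beta|^{M_U}<j_U(\lambda)$. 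The idea is that $U$ "looks like" an ultrafilter concentrating on sets of size $<\lambda$, and the point $p$ witnessing $D\RK^\eta U$ should be read off from these covering sets.

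Next I would build the function $p\colon X\to \kappa$ for an appropriate $\kappa<\lambda$. The natural candidate: since $2^\lambda\le\eta$, fix a bijection of $2^\lambda$ with a subset of $\eta$ and use the covering set $A$ for that ordinal; because $|A|^{M_U}<j_U(\lambda)$, there is a cardinal $\bar\kappa<\lambda$ and a surjection in $M_U$ from $j_U(\bar\kappa)$ onto (a set containing) $A$, hence — pulling back along $j_U$ — a "coding" structure on $X$ of size $\bar\kappa$. More concretely, represent $A$ as $[\langle a_x : x\in X\rangle]_U$ where each $a_x$ has size $<\lambda$; let $\kappa<\lambda$ bound all the $|a_x|$ on a set in $U$, fix bijections $e_x\colon |a_x|\to a_x$, and define $p(x)$ to encode enough of $e_x$ and $a_x$. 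Then for any $\beta<\eta$ and $f\colon X\to\beta$, the element $[f]_U$ lies in $A_\beta$, hence is "named" by an ordinal $<j_U(\kappa)$ relative to the coding; tracing the name back through $U$ produces a $g\colon X\to\kappa$ (so $g\colon X\to\beta$ after composing with a decoding) with $[f]_U=[g\circ p']_U$ for the relevant projection $p'$ of $p$. Let $D=p_*(U)$, an ultrafilter on $\kappa<\lambda$; by construction $D\RK^\eta U$.

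The main obstacle I expect is making the coding uniform across \emph{all} $\beta<\eta$ simultaneously with a single $p$ — the covering sets $A_\beta$ a priori depend on $\beta$, and one must arrange a single $\kappa<\lambda$ and a single coding function that handles every $\beta<\eta$. This is where $2^\lambda\le\eta$ and the regularity of $\lambda$ are essential: regularity lets one take a single $\kappa<\lambda$ bounding the relevant fiber-sizes (using indecomposability of $U$ between $\lambda$ and $\eta$ to keep things below $j_U(\lambda)$), and $2^\lambda\le\eta$ guarantees there is room below $\eta$ to encode the $2^\lambda$-many subsets of $\lambda$ that arise, so that the coding scheme fixed for one sufficiently large $\beta^*<\eta$ automatically serves all smaller $\beta$. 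A secondary technical point is checking the factorization clause of the definition of $\RK^\gamma$ exactly: that for $\beta<\eta$ and $f\colon X\to\beta$ there really is $g\colon X\to\beta$ with $[f]_U=[g\circ p]_U$, which amounts to showing $[f]_U$ is in the Skolem hull of $j_U[\,\cdot\,]$ together with the parameter naming the coding — i.e. that $[f]_U$ is definable in $M_U$ from $p$-data and ordinals $<j_U(\lambda)$ that themselves come from $X$. I would verify this by the standard argument that $j_U[\beta]$ being small in $M_U$ forces $[f]_U$ to be captured by finitely much such data.
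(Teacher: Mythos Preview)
The paper does not supply a proof of this theorem; it is attributed to Silver, cited, and marked with \qed. So there is nothing to compare your argument against except the literature.

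Your proposal, however, rests on a genuine confusion. You assert that $(\lambda,\eta)$-indecomposability translates in the ultrapower to ``for every $\beta<\eta$, $j_U[\beta]$ has size $<j_U(\lambda)$ in $M_U$,'' and you build the covering sets $A_\beta$ on this. But that statement is the ultrapower characterization of $(\lambda,\beta)$-\emph{regularity}, not of indecomposability. For regular $\lambda$, $\lambda$-indecomposability is equivalent to $j_U[\lambda]$ being \emph{cofinal} in $j_U(\lambda)$ (every $f:X\to\lambda$ is bounded mod $U$), which is essentially the opposite of being covered by a small set. Concretely, if $U$ is $\kappa$-complete with $\kappa>\eta$, then $U$ is $(\lambda,\eta)$-indecomposable, yet $j_U$ is the identity below $\kappa$ and $j_U[\beta]=\beta$ has $M_U$-cardinality $\beta\ge\lambda$ for $\lambda\le\beta<\eta$. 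So the sets $A_\beta$ you want need not exist, and the argument does not get off the ground.

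The correct ultrapower translation of $(\lambda,\eta)$-indecomposability (with $\lambda$ regular) is that every $a<j_U(\eta)$ lies in $j_U(S)$ for some $S\in V$ with $|S|<\lambda$; equivalently, every $f:X\to\eta$ takes fewer than $\lambda$ values on a $U$-large set. Silver's argument works from this: one must produce a single function $p:X\to\gamma$ (some $\gamma<\lambda$) through which all such $f$ factor mod $U$, and the hypothesis $2^\lambda\le\eta$ is used to bound the number of essentially distinct small-range functions that must be simultaneously refined. The uniformity problem you identify at the end is real, but it is solved by an argument of a different shape than the covering-set approach you outline.
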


\begin{proof}[Proof of \cref{thm:cointernal_indecomposable}]
    We assume by induction that the theorem holds for all \(\bar \lambda < \lambda\),
    and towards a contradiction that there exist ultrafilters \(U\) and \(W\) 
    such that \(U\I W\), \(W\I U\), 
    but neither \(U\) nor \(W\) is \((\lambda,\lambda^+)\)-indecomposable.
    By \cref{lma:prikry}, we may assume without loss of generality
    that \(\lambda\) is singular, \(W\) is \(\lambda\)-decomposable, and 
    \(U\) is \((\kappa,\lambda^+)\)-regular
    for some \(\kappa < \lambda\).

    We claim one can reduce to the case that \(W\) is a uniform ultrafilter on \(\lambda\).
    Note that \(W\) is \((\lambda^+,\lambda^{+\omega})\)-indecomposable
    since \(W\) is \(\lambda^+\)-indecomposable (by \cref{lma:prikry}).
    Applying \cref{thm:Silver} and the GCH, 
    let \(\tilde{W}\) be a uniform ultrafilter on \(\lambda\)
    such that \(\tilde{W}\RK^{\lambda^{+\omega}} W\).
    Let \(p\) witness that \(\tilde{W}\RK^{\lambda^{+\omega}} W\), so
    \(p_*(W) = \tilde{W}\).
    Let \(\tilde{U}\) be the ultrafilter on 
    \(\beta(\lambda)\times P_{\kappa}(\lambda)\) 
    derived from \(j_U\) using \((s_U(\tilde W),\sigma)\) 
    where \(\sigma\in j_U(P_\kappa(\lambda^+))\)
    covers \(j_U[\lambda^+]\). (Here \(\beta(\lambda)\)
    denotes the set of ultrafilters on \(\lambda\).) 
    Thus by \cref{thm:internal_RK},
    \(\tilde W\I \tilde U\) and by \cref{lma:regular_katetov},
    \(\tilde U\) is \((\kappa,\lambda^+)\)-regular.
    Let \(f\) be such that \([f]_W = s_W(\tilde{U})\), 
    and assume without loss of generality
    that the range of \(f\) has cardinality
    strictly less than \(\lambda^{+\omega}\).
    Since \(\tilde{W} \RK^{\lambda^{+\omega}} W\),
    there is a function \(\tilde{f}\) such that 
    \(\tilde{f}\circ p = f\). By \cref{thm:internal_RK}, 
    \([\tilde{f}]_{\tilde{W}} = s_{\tilde{W}}(\tilde{U})\).
    So \(\tilde U \I \tilde W\).
    Replacing \(U\) and \(W\) with \(\tilde U\) and \(\tilde W\),
    we reduce to the case that \(W\) is a uniform ultrafilter on \(\lambda\).

    Since \(U\) is \((\kappa,\lambda^+)\)-regular, 
    \(U\) is \(\delta\)-decomposable for every regular cardinal \(\delta\) such that 
    \(\kappa\leq\delta < \lambda\). (This is a well-known fact,
    but it is also an immediate 
    consequence of \cref{lma:decomp_kat} and \cref{lma:regular_RK}.)
    Therefore by our induction hypothesis, \(W\) is \(\eta\)-indecomposable for all
    cardinals \(\eta\) such that \(\kappa\leq \eta < \lambda\):
    we know at least one of \(U\) and \(W\) is \((\eta,\eta^+)\)-indecomposable,
    and \(U\) is not (since \(U\) is \(\eta^+\)-decomposable since \(\eta^+\) is regular).
    We are assuming GCH, so we can apply Silver's Lemma (\cref{thm:Silver})
    to obtain an ultrafilter \(D\) on a cardinal \(\gamma\) less than \(\kappa\) 
    such that \(D\leq_{\lambda} W\).
    Equivalently, there is an elementary embedding \(k : M_D\to M_W\) such that
    \(k\circ j_D = j_W\) and
    \(j_W(\eta)\subseteq \ran(k)\) for all \(\eta < \lambda\).
    
    By our induction hypothesis,
    for any cardinal \(\bar \lambda < \lambda\), either \(U\) or \(W\) is 
    \((\bar \lambda,(\bar \lambda)^+)\)-in\-de\-com\-pos\-able.
    Since \(D\RK W\) and \(\size D < \lambda\),
    for any cardinal \(\bar \lambda < \lambda\), either \(U\) or \(D\) is 
    \((\bar \lambda,(\bar \lambda)^+)\)-in\-de\-com\-pos\-able.
    Applying \cref{thm:indecomposable_product}, \(D\times U\) is an ultrafilter.
    In particular, \(j_D(U) = s_D(U)\). 
    Let \(k : M_D\to M_{W}\) be an elementary embedding
    such that \(k \circ j_D = j_{\bar W}\) and \(j_U(\eta) \subseteq \ran(k)\) for all
    \(\eta < \lambda\). Let \(W_*\) be the \(M_D\)-ultrafilter on \(j_D(\lambda)\) derived 
    from \(k\) using \(\id_{W}\). Then \(M_{W}\) is isomorphic to the ultrapower of \(M_D\)
    by \(W_*\) and \(k\) to the associated ultrapower embedding.
    We will identify the two models via the unique isomorphism between \(k\) and 
    \((j_{W_*})^{M_D}\), so \(M_W = (M_{W_*})^{M_D}\) and \(k = (j_{W_*})^{M_D}\).

    Let \(F\) be the filter generated by \(W_*\) (in \(V\)).
    By \cref{lma:filter_completeness},
    \(F\) is \(\lambda\)-complete, 
    and since \(|j_D(P(\lambda))|\leq (\lambda^+)^\gamma = \lambda^+\),
    \(F\) is generated by at most \(\lambda^+\)-many sets.
    Therefore by \cref{lma:regular_RK}, \(F\) extends to an ultrafilter \(G\)
    preceding the \((\kappa,\lambda^+)\)-regular ultrafilter 
    \(U\) in the Rudin-Keisler order. Since \(G\RK U \I D\),
    \cref{thm:internal_RK} yields that \(s_D(G)\RK s_D(U)\) in \(M_D\),
    and in particular \(G\I D\). Note that \(W_*\) is the ultrafilter
    derived from \(j_G\restriction M_D\) using \(\id_G\). Therefore 
    \(W_*\in M_D\) and \(W_*\RK s_D(G)\) in \(M_D\).

    So far we have shown that in \(M_D\), \[W_*\RK s_D(G)\RK s_D(U)\] Since \(U\I W\),
    \(s_W(U)\in M_W\). But \(s_W(U) = s_{W_*}(s_D(U))\), so 
    since \(s_{W_*}(s_D(U))\in M_W = (M_{W_*})^{M_D}\),
    \(s_D(U)\I W_*\) in \(M_D\). Now in \(M_D\), 
    \(W_*\RK s_D(U)\I W_*\). Applying \cref{thm:internal_RK} one last time,
    \(W_*\I W_*\) in \(M_D\), and it follows
    (by \cref{cor:internal_irreflexive})
    that \(W_*\) is a principal ultrafilter of \(M_D\).
    Thus \(j_D = j_{W_*}^{M_D}\circ j_D = j_W\), or in other words,
    \(D\RKE W\), contradicting that \(\size D < \size W\).
\end{proof}
\bibliographystyle{plain}
\bibliography{Bibliography.bib}

\begin{thebibliography}{1}

\bibitem{Blass}
Andreas~Raphael Blass.
\newblock {\em Orderings of {U}ltrafilters}.
\newblock ProQuest LLC, Ann Arbor, MI, 1970.
\newblock Thesis (Ph.D.)--Harvard University.

\bibitem{UA}
Gabriel Goldberg.
\newblock {\em The {U}ltrapower {A}xiom}.
\newblock PhD thesis, Harvard University, 2019.

\bibitem{IR}
Gabriel Goldberg.
\newblock Rank-into-rank embeddings and {S}teel's conjecture.
\newblock {\em Journal of Symbolic Logic}, 2021.

\bibitem{Larson}
Paul~B. Larson.
\newblock {\em The stationary tower}, volume~32 of {\em University Lecture
  Series}.
\newblock American Mathematical Society, Providence, RI, 2004.
\newblock Notes on a course by W. Hugh Woodin.

\bibitem{LippariniRegular}
Paolo Lipparini.
\newblock More on regular and decomposable ultrafilters in {ZFC}.
\newblock {\em Mathematical Logic Quarterly}, 56(4):340--374, Jul 2010.

\bibitem{Silver}
Jack~H. Silver.
\newblock Indecomposable ultrafilters and {$0^\#$}.
\newblock In {\em Proceedings of the {T}arski {S}ymposium ({P}roc. {S}ympos.
  {P}ure {M}ath., {V}ol. {XXV}, {U}niv. {C}alif., {B}erkeley, {C}alif., 1971)},
  pages 357--363, 1974.

\end{thebibliography}
\end{document}